\documentclass[12pt]{amsart}
\usepackage{amsmath,amssymb,amsfonts,amsthm,amscd,indentfirst}
\textheight 8.5in
\textwidth 6 in
\topmargin 0.0cm
\oddsidemargin 0.5cm \evensidemargin 0.5cm
\parskip 0.0cm
\usepackage{amsmath,amssymb,amsfonts,amsthm,amscd}
\usepackage{indentfirst}
\usepackage{hyperref}

\numberwithin{equation}{section}

\newtheorem{prop}{Proposition}[section]
\newtheorem{theo}[prop]{Theorem}
\newtheorem{lemm}[prop]{Lemma}

\newtheorem{rema}[prop]{Remark}

\newtheorem{defi}[prop]{Definition}

\def\and{\quad{\rm and}\quad}

\def\<{\langle}
\def\>{\rangle}

\usepackage{graphicx}

   %% And a not so common one.

\begin{document}
\title[On the Dirichlet problem for Lagrangian phase equation]{On the Dirichlet problem for Lagrangian phase equation with critical and supercritical phase}
\author[Siyuan Lu]{Siyuan Lu}
\address{Department of Mathematics and Statistics, McMaster University, 
1280 Main Street West, Hamilton, ON, L8S 4K1, Canada.}
\email{siyuan.lu@mcmaster.ca}
\thanks{The author was supported in part by NSERC Discovery Grant.}

\begin{abstract}
In this paper, we solve the Dirichlet problem for Lagrangian phase equation with critical and supercritical phase. A crucial ingredient is the interior $C^2$ estimate. Our result is sharp in the sense that there exist singular solutions in the subcritical phase case. 
\end{abstract}

\maketitle

\section{Introduction}

In this paper, we study the Dirichlet problem for the following fully nonlinear equation which we call Lagrangian phase equation 
\begin{align}\label{SL}
F(D^2u):=\sum_{i=1}^n \arctan(\lambda_i)=\theta(x),
\end{align}
where $\lambda_i$'s are the eigenvalues of $D^2u$.

When the phase $\theta(x)=\Theta$ is a constant, then equation (\ref{SL}) is the special Lagrangian equation
\begin{align}\label{SL-original}
F(D^2u)=\sum_{i=1}^n \arctan(\lambda_i)=\Theta.
\end{align} 

Equations (\ref{SL}) and (\ref{SL-original}) originate in the study of calibrated geometry by Harvey and Lawson \cite{HL}. Let $(x,Du(x))\subset \mathbb{R}^n\times\mathbb{R}^n$ be a Lagrangian submanifold, then the mean curvature vector is given by $H=J\nabla_g\theta$. The Lagrangian submanifold is called special if and only if it is a (volume minimizing) minimal submanifold. In other words, $(x,Du(x))$ is a special Lagrangian submanifold if and only if it satisfies equation (\ref{SL-original}). Thus equation (\ref{SL-original}) can be viewed as the potential equation for the special Lagrangian submanifold. Similarly, equation (\ref{SL}) can be viewed as the potential equation for the Lagrangian submanifold with mean curvature vector $H$. 

\medskip

The range of phase plays an important role in the study of equations (\ref{SL}) and (\ref{SL-original}). It was proved by Yuan \cite{Y06} that the level set of Lagrangian phase operator is convex only if $|\Theta|\geq \frac{(n-2)\pi}{2}$. Consequently, he introduced the following notion in \cite{Y06}.

\begin{defi}
The phase is called critical if $|\Theta|=\frac{(n-2)\pi}{2}$, supercritical if $|\Theta|>\frac{(n-2)\pi}{2}$ and subcritical if $|\Theta|<\frac{(n-2)\pi}{2}$.
\end{defi}

It turns out that this notion is crucial to the solvability of the Dirichlet problem for equation (\ref{SL}). For the supercritical phase case $|\theta(x)|>\frac{(n-2)\pi}{2}$, it was solved recently by Collins, Picard and Wu \cite{CPW}, see also \cite{B2,CNS}. In the subcritical phase case $|\theta(x)|< \frac{(n-2)\pi}{2}$, singular solutions were constructed by Nadirashvili and Vlăduţ \cite{NV} and Wang and Yuan \cite{WdY13}. 

From analysis point of view, it is desirable to study the borderline case. In this paper, we solve the Dirichlet problem for the critical and supercritical phase case $|\theta(x)|\geq \frac{(n-2)\pi}{2}$. The following is our main theorem.
\begin{theo}\label{theorem}
Let $\Omega$ be a bounded, strictly convex domain in $\mathbb{R}^n$ for $n\geq 3$. Let $\varphi\in C^{1,1}(\partial\Omega)$ and let $\theta\in C^{1,1}(\overline{\Omega})$ with $|\theta(x)|\in \big[ \frac{(n-2)\pi}{2}, \frac{n\pi}{2}\big)$. Then the Dirichlet problem
\begin{align}\label{Dirichlet Problem}
\begin{cases}
F(D^2u)=\theta(x),\quad \textit{in }\Omega,\\
u=\varphi,  \quad\textit{on } \partial\Omega,
\end{cases}
\end{align}
admits a unique solution $u\in C^{3,\alpha}(\Omega)\cap C^{0,1}(\overline{\Omega})$. Moreover, if $\theta$ is smooth, then $u$ is also smooth in $\Omega$.
\end{theo}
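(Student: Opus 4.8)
The plan is to obtain the solution as a limit of solutions of the already-solved supercritical problems $F(D^2u)=\theta+\varepsilon$, the one genuinely new ingredient being an interior $C^2$ estimate that survives down to the critical phase; the remaining pieces are structural facts about the operator $F$, barriers from the strict convexity of $\Omega$, and Evans--Krylov/Schauder theory. First a reduction: since $\overline\Omega$ is connected and $|\theta|\ge\frac{(n-2)\pi}{2}>0$, the phase never vanishes, so replacing $(u,\theta,\varphi)$ by $(-u,-\theta,-\varphi)$ if necessary --- legitimate because $F(-A)=-F(A)$ --- we may assume $\theta\in\big[\frac{(n-2)\pi}{2},\frac{n\pi}{2}\big)$, and by compactness $\sup_{\overline\Omega}\theta\le\frac{n\pi}{2}-\delta_0$ for some $\delta_0>0$. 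On the superlevel set $\{F\ge\frac{(n-2)\pi}{2}\}$ the operator $F$ is concave \cite{Y06}; I will also use the elementary facts that there the sum of the two smallest eigenvalues of $D^2u$ is nonnegative --- so at most one eigenvalue is negative --- and $F^{ij,ij}=-\frac{\lambda_i+\lambda_j}{(1+\lambda_i^2)(1+\lambda_j^2)}\le 0$ for all $i\ne j$. Uniqueness in the theorem is then immediate from the comparison principle: if $u_1,u_2$ both solve the problem, $w=u_1-u_2$ satisfies a linear elliptic equation $a^{ij}w_{ij}=0$ with $a^{ij}=\int_0^1 F^{ij}\big(tD^2u_1+(1-t)D^2u_2\big)\,dt$ positive definite, and $w=0$ on $\partial\Omega$, so $w\equiv 0$.

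Barriers come next. Being bounded and strictly convex, $\Omega$ has a smooth uniformly convex defining function $\rho$ ($\rho<0$ in $\Omega$, $\rho=0$ on $\partial\Omega$, $D^2\rho\ge c_0 I$); let $\tilde\varphi$ be a $C^{1,1}$ extension of $\varphi$. For $A$ large, $\underline u=\tilde\varphi+A\rho$ has all eigenvalues of $D^2\underline u$ at least $M(A)\to\infty$, so $F(D^2\underline u)\to\frac{n\pi}{2}$, hence $F(D^2\underline u)\ge\sup_{\overline\Omega}\theta$ a.e., with $\underline u=\varphi$ on $\partial\Omega$; symmetrically $\overline u=\tilde\varphi-A\rho$ is a supersolution with $F(D^2\overline u)\le\inf_{\overline\Omega}\theta$ and $\overline u=\varphi$ on $\partial\Omega$. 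These are barriers for every admissible phase at once, so by comparison $\underline u\le u\le\overline u$; this gives $\|u\|_{C^0(\overline\Omega)}\le C$ and, since $\underline u,\overline u$ are Lipschitz and equal $\varphi$ on $\partial\Omega$, the boundary estimate $|u(x)-\varphi(x_0)|\le L|x-x_0|$ for $x\in\Omega$, $x_0\in\partial\Omega$, with $C,L$ depending only on $n,\delta_0,\Omega,\|\varphi\|_{C^{1,1}}$.

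The crux is the \emph{interior $C^2$ estimate}: for a smooth solution of $F(D^2u)=\theta$ with $\theta\ge\frac{(n-2)\pi}{2}$, $|\theta|\le\frac{n\pi}{2}-\delta_0$ on $B_{2r}(x_0)\Subset\Omega$, one wants $\sup_{B_r(x_0)}|D^2u|$ bounded in terms of $n,\delta_0,\|\theta\|_{C^{1,1}},r^{-1}$ and $\operatorname{osc}_{B_{2r}(x_0)}u$. I would follow the standard route for Lagrangian-type equations: set $b=\log\lambda_{\max}(D^2u)$ (treating eigenvalue multiplicity by a perturbation), evaluate $F^{ij}\partial_{ij}(\eta^2 b)$ at an interior maximum of $\eta^2 b$ for a cutoff $\eta$ supported in $B_{2r}$, and combine the concavity of $F$ with the special identities for $F=\sum\arctan\lambda_i$ (the relations among the $F^{ij}$ and the $F^{ij,kl}$ above) to absorb the third-order terms, reaching a Jacobi-type inequality $F^{ij}b_{ij}\ge\varepsilon_0\sum_i F^{ii}b_i^2-C$ on $\{b\gg 1\}$ and hence the bound. \textbf{The main obstacle is precisely the critical case $\theta\equiv\frac{(n-2)\pi}{2}$}: there the level set of $F$ is convex but not strictly convex, so the gap making this inequality usable in the supercritical case of \cite{CPW} degenerates, and a workable inequality must be recovered from the finer structure of the critical regime --- nonnegativity of $\lambda_{n-1}+\lambda_n$, control of all but one eigenvalue, a Lewy-type rotation, or phase-dependent positivity of combinations of the $F^{ij}$. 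This is the estimate the abstract singles out, and essentially all the work lies here. Granting it, a divergence-theorem argument upgrades it to an interior gradient bound depending only on $\|u\|_{C^0}$ and $\operatorname{dist}(\cdot,\partial\Omega)$; combined with the boundary estimate and, near $\partial\Omega$, a maximum-principle argument for $|Du|^2$ (using that at least $n-1$ eigenvalues of $D^2u$ are bounded below), this yields $\|Du\|_{C^0(\overline\Omega)}\le C$. On any $\Omega'\Subset\Omega$ the Hessian is then bounded, so $F$ is uniformly elliptic there; concavity of $F$ and $\theta\in C^{1,1}$ give via Evans--Krylov interior $C^{2,\alpha}$ bounds, and Schauder bootstrapping gives $C^{3,\alpha}$, with $C^\infty$ when $\theta$ is smooth.

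Finally, existence. For $\varepsilon\in(0,\delta_0)$ the phase $\theta+\varepsilon$ lies in $\big(\frac{(n-2)\pi}{2},\frac{n\pi}{2}\big)$ and is supercritical, so by \cite{CPW} (after a routine mollification of the $C^{1,1}$ data and, if needed, approximation of $\Omega$ from inside by smooth uniformly convex domains) the problem $F(D^2u_\varepsilon)=\theta+\varepsilon$ in $\Omega$, $u_\varepsilon=\varphi$ on $\partial\Omega$, is solvable. The barriers and estimates above are uniform in $\varepsilon$ (the barriers work for all phases $\le\frac{n\pi}{2}-\frac{\delta_0}{2}$), so $\|u_\varepsilon\|_{C^0(\overline\Omega)}+\|Du_\varepsilon\|_{C^0(\overline\Omega)}\le C$ and $\|u_\varepsilon\|_{C^{3,\alpha}(\overline{\Omega'})}\le C(\Omega')$ for every $\Omega'\Subset\Omega$. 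Passing to a subsequence, $u_\varepsilon\to u$ in $C^{3,\alpha}_{\mathrm{loc}}(\Omega)$ and uniformly on $\overline\Omega$; the limit solves $F(D^2u)=\theta$ in $\Omega$, is Lipschitz on $\overline\Omega$ with $u=\varphi$ on $\partial\Omega$ (from $\underline u\le u\le\overline u$), and is smooth in $\Omega$ when $\theta$ is. With the uniqueness from the first paragraph this proves the theorem. (A method-of-continuity argument deforming $\theta$ to the phase of $\underline u$, with the same a priori estimates, is an equivalent route.)
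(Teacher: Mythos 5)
Your skeleton --- reduce to $\theta\ge\frac{(n-2)\pi}{2}$, Lipschitz barriers from strict convexity, boundary plus global gradient bounds, an interior $C^2$ estimate, a $C^{2,\alpha}$ estimate and Schauder, then existence by approximating with supercritical phases and passing to the limit, uniqueness by the maximum principle --- is the same as the paper's. But the load-bearing step is missing: the interior $C^2$ estimate for \emph{variable} critical phase is exactly the new content of this paper (Theorem \ref{thm-C^2-intro}); it is not in the literature you could cite (\cite{WY09,WY10,WdY14} treat constant phase, \cite{B1} strictly supercritical variable phase), and you explicitly ``grant'' it after naming the obstacle, so the heart of the proof is absent. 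Your sketch of how it would go is also flawed in two ways. (i) You assert that $F$ is concave on $\{F\ge\frac{(n-2)\pi}{2}\}$; this is false --- only the level sets are convex \cite{Y06}. Indeed $\partial^2F/\partial\lambda_n^2=-2\lambda_n/(1+\lambda_n^2)^2>0$ whenever $\lambda_n<0$, which does occur at critical phase, and the paper stresses that no hidden concavity is available: the Jacobi inequality for $b=\ln\lambda_1$ has to be extracted from a direct analysis of the third-order terms, with eigenvalue multiplicity handled in the viscosity sense following \cite{BCD}, and with the additional difficulty that all coefficients $F^{ii}$ may degenerate simultaneously (at critical phase $\lambda_n\to-\infty$ forces $\lambda_{n-1}\to+\infty$). (ii) Even granting the Jacobi inequality, your proposed conclusion via a cutoff maximum principle applied to $\eta^2 b$ does not yield the Hessian bound: at the interior maximum you only control $\eta^2\sum_iF^{ii}b_i^2$, not $b$ itself. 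The paper instead converts the pointwise Jacobi inequality into an integral one (using \cite{I} to pass from viscosity to distributional) and runs the Michael--Simon/Warren--Yuan iteration on the Lagrangian graph, which is where the Lipschitz bound enters.

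The same false concavity claim underlies your $C^{2,\alpha}$ step: classical Evans--Krylov does not apply. The fix --- and what the paper does --- is to use the convexity of the level sets of $F$ at critical and supercritical phase together with the extended Evans--Krylov theorem of \cite{CJY}. The remaining ingredients of your proposal are fine and essentially coincide with the paper's: your explicit barriers $\tilde\varphi\pm A\rho$ reproduce what the paper imports from (3.4) and (3.9) of \cite{B2}; the global gradient bound via an interior-to-boundary estimate matches Lemma \ref{C^1}; the approximation by supercritical phases solved via \cite{CPW} or \cite{B2} and compactness, as well as uniqueness through the linearized operator, are as in the paper.
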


\begin{rema}
Our result is sharp in view of singular solutions in the subcritical phase case $|\theta(x)|< \frac{(n-2)\pi}{2}$. This completes the picture for the solvability of the Dirichlet problem for Lagrangian phase equation.
\end{rema}

The Dirichlet problem for a broad class of fully nonlinear equations was studied in a seminal paper by Caffarelli, Nirenberg and Spruck \cite{CNS}, generalizing their previous work on Monge-Amp\`ere equations \cite{CNS84}. Since then, the Dirichlet problem for fully nonlinear equations under various structure conditions has been studied, see \cite{T} and references therein. An essential requirement for those equations is the concavity of the operator $F$, which is heavily used in the regularity estimates: gradient estimate, $C^2$ estimate as well as $C^{2,\alpha}$ estimate. Without concavity condition, even for uniformly elliptic operator $F$, there exist singular solutions constructed by Nadirashvili and Vlăduţ \cite{NV07}.

For the Lagrangian phase operator, a key observation in \cite{CPW} is that the operator $G=-e^{-AF}$ is in fact concave for the supercritical phase case $\theta(x)>\frac{(n-2)\pi}{2}$. This allows them to solve the Dirichlet problem for equation (\ref{SL}) in the supercritical phase case. In contrast, it is unclear whether there is a hidden concavity in the critical and supercritical phase case. Consequently, the a priori estimates are extremely delicate. 

A crucial ingredient in our proof is the following interior $C^2$ estimate, which is of independent interest.
\begin{theo}\label{thm-C^2-intro}
Let $B_2$ be a ball of radius $2$ in $\mathbb{R}^n$ for $n\geq 3$. Let $\theta\in C^{1,1}(B_2)$ with $|\theta(x)|\in \big[ \frac{(n-2)\pi}{2}, \frac{n\pi}{2}\big)$ and let $u\in C^4(B_2)$ be a solution of equation (\ref{SL}). Then we have
\begin{align*}
|D^2u(0)|\leq C,
\end{align*}
where $C$ depends only on $n,\|u\|_{C^{0,1}(\overline{B}_1)}$ and $\|\theta\|_{C^{1,1}(\overline{B}_1)}$.
\end{theo}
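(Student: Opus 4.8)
The plan is to follow the classical strategy for interior second-derivative estimates of the Pogorelov type, but adapted to the Lagrangian phase operator in the critical/supercritical regime where the natural concavity used in \cite{CPW} degenerates. Let $w = |D^2u|$ or, more conveniently, let $\lambda_{\max}(x)$ be the largest eigenvalue of $D^2u(x)$; since the solution is convex-like for this phase range (the eigenvalues are bounded below by $\tan$ of something universal because $\theta > \frac{(n-2)\pi}{2}$ forces at most one negative eigenvalue, and in fact a lower bound $\lambda_i \ge -C$), controlling $\lambda_{\max}$ controls $|D^2u|$. I would consider the test function
\begin{align*}
\Phi(x,\xi) = \eta(x)^{\beta}\, e^{\phi(|Du|^2)}\, \log\big( u_{\xi\xi}(x) \big),
\end{align*}
where $\eta$ is a cutoff supported in $B_2$ with $\eta \equiv 1$ on $B_1$, $\xi$ ranges over unit vectors, $\phi$ is an auxiliary increasing function of the gradient to be chosen (this is the standard device for absorbing first-order terms and exploiting the gradient bound), and $\beta>0$ is a large universal power. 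Let $\Phi$ attain its maximum at an interior point $x_0$ in the direction $\xi = e_1$, and rotate so that $D^2u(x_0)$ is diagonal with entries $\lambda_1 \ge \lambda_2 \ge \cdots \ge \lambda_n$ and $\lambda_1 = u_{11}(x_0)$ large (otherwise we are done).

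At $x_0$ I would differentiate $\log\Phi$ once (first-order condition) and twice (second-order condition $\le 0$), and contract the second-order inequality with the linearized operator $F^{ij} = \partial F/\partial u_{ij} = \frac{1}{1+\lambda_i^2}\delta_{ij}$, which is diagonal and positive. Differentiating the equation $\sum \arctan(\lambda_i) = \theta$ twice in the $e_1$ direction gives
\begin{align*}
F^{ij} u_{ij11} - F^{ij,kl} u_{ij1} u_{kl1} = \theta_{11},
\end{align*}
and the crucial concavity-type term is $-F^{ij,kl}u_{ij1}u_{kl1}$. For the Lagrangian phase operator this Hessian has the explicit form, for the diagonalized matrix, $F^{ij,kl}u_{ij1}u_{kl1} = -\sum_i \frac{2\lambda_i}{(1+\lambda_i^2)^2} u_{ii1}^2 + \sum_{i\ne j}\frac{\cdots}{\cdots}(u_{ij1})^2$ with the off-diagonal coefficients having a definite sign. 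The key point — and this is where the phase range enters — is that one can show $-F^{ij,kl}u_{ij1}u_{kl1} \ge -C \sum_i F^{ii} u_{ii1}^2$ only fails mildly, and more importantly the "good" third-order terms $F^{ii}u_{11i}^2/u_{11}^2$ coming from the cutoff/gradient factors combined with the partial concavity $\sum_{i} \frac{2\lambda_i}{(1+\lambda_i^2)^2}u_{ii1}^2 \ge 0$ when at most one $\lambda_i < 0$ (valid precisely because $\theta \ge \frac{(n-2)\pi}{2}$) — are enough to dominate. I would separate indices into the "large" set where $\lambda_i$ is comparable to $\lambda_1$ and the "bounded" set, handle the negative eigenvalue (at most one) by hand using that $F^{ii} = \frac{1}{1+\lambda_i^2} \ge c$ there, and use the first-order condition to replace $u_{11i}$ terms.

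The main obstacle, as the introduction signals, is that there is no global concavity: the term $-F^{ij,kl}u_{ij1}u_{kl1}$ is not nonnegative, so one cannot simply discard it as in the Dirichlet-problem literature under condition (1.1) of \cite{CNS}. The heart of the argument will be a careful algebraic inequality showing that the dangerous part of $-F^{ij,kl}u_{ij1}u_{kl1}$ (the terms with $\lambda_i>0$ large) is absorbed by the positive gradient term $\phi'(|Du|^2)$ times $\sum F^{ii}(|Du|^2)_i^2$-type quantities together with $\beta$ times the cutoff gradient terms, exploiting that on the large-eigenvalue set $F^{ii}\lambda_i^2 = \frac{\lambda_i^2}{1+\lambda_i^2}$ is bounded while $F^{ii} = \frac{1}{1+\lambda_i^2}$ is small, which makes the bad terms lower order in $\lambda_1$ after the logarithm is used. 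A secondary technical point is passing from the pointwise bound on $u_{11}(x_0)$ to the bound on $|D^2u(0)|$, which follows from the a priori lower bound $\lambda_n \ge -C(n,\|u\|_{C^{0,1}},\|\theta\|_{C^{0}})$ valid in the supercritical/critical range together with $\sum\lambda_i = \Delta u$ and the maximum principle comparison giving an upper bound on $\Delta u$ in terms of $u_{11}(x_0)$; I would state this lower-eigenvalue bound as a preliminary lemma. Once $\Phi(x_0,e_1)$ is shown bounded, evaluating at $x=0$ and using $\eta(0)=1$ yields $|D^2u(0)|\le C$ with the asserted dependence.
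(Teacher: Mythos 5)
Your proposal rests on two claims that fail precisely in the regime this theorem covers. First, you assume a preliminary lower bound $\lambda_n\geq -C(n,\|u\|_{C^{0,1}},\|\theta\|_{C^0})$ ``valid in the supercritical/critical range.'' This is false at (and near) the critical phase: for $\theta=\frac{(n-2)\pi}{2}$ one only gets $\lambda_{n-1}\geq|\lambda_n|$ and $\lambda_1+(n-1)\lambda_n\geq 0$ (Lemma \ref{sigma_{n-1}}), so $\lambda_n$ may tend to $-\infty$ together with $\lambda_1\to+\infty$; a uniform bound $\lambda_n\geq -C$ holds only when $\theta-\frac{(n-2)\pi}{2}$ is bounded away from zero, which is exactly the supercritical situation of \cite{B1} and is explicitly identified in the introduction as the property that is \emph{not} available here. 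Second, the ``partial concavity'' you invoke, $\sum_i \frac{2\lambda_i}{(1+\lambda_i^2)^2}u_{ii1}^2\geq 0$ when at most one $\lambda_i<0$, is also false: the single negative term $\frac{2\lambda_n}{(1+\lambda_n^2)^2}u_{nn1}^2$ can dominate when $u_{nn1}$ is the large third derivative. Controlling this term is the actual crux; in the paper it is handled not by discarding it but by using the once-differentiated equation $\sum_i F^{ii}u_{ii1}=\theta_1+\theta_u u_1$ to eliminate $F^{nn}u_{nn1}$, followed by a sharp Cauchy--Schwarz whose constant is computed through the identity $\sum_i\frac{\lambda_1-\lambda_i}{2\lambda_i}=\frac{\lambda_1}{2}\frac{\sigma_{n-1}}{\sigma_n}-\frac{n}{2}$ and the signs $\sigma_{n-1}\geq 0$, $\sigma_n<0$. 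Your sketch defers exactly this step (``the heart of the argument will be a careful algebraic inequality\dots''), so the essential content is missing.

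Beyond these two incorrect claims, the overall route is unlikely to close: a pointwise Pogorelov-type maximum principle with $\eta^\beta e^{\phi(|Du|^2)}\log u_{\xi\xi}$ is not known to work at critical phase, because without the lower bound on $\lambda_n$ all coefficients $F^{ii}=\frac{1}{1+\lambda_i^2}$ of the linearized operator may degenerate simultaneously and there is no concavity to generate a coercive good term. The paper instead proves a Jacobi-type inequality for $b=\ln\lambda_1$ (Lemma \ref{max}), interpreted in the viscosity sense following \cite{BCD} to handle multiplicity of $\lambda_1$, converts it into an integral Jacobi inequality with respect to the induced metric $g=I_n+(D^2u)^2$ (Lemma \ref{integral jacobi}, via \cite{I}), and then concludes by the integral machinery of \cite{WY09,WY10,WdY14} based on the Michael--Simon mean value and Sobolev inequalities on the Lagrangian graph; this is also what produces the stated dependence on only $\|u\|_{C^{0,1}}$ and $\|\theta\|_{C^{1,1}}$. (Your final reduction from $\lambda_1$ to $|D^2u|$ does not need $\lambda_n\geq -C$ either; it follows directly from $|\lambda_n|\leq\lambda_{n-1}\leq\lambda_1$.) As written, the proposal does not constitute a proof.
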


Another important ingredient is the gradient estimate. Our interior to boundary gradient estimate works for all domains and for all phase functions including subcritical phase case, see Lemma \ref{C^1}. It was kindly pointed out by the anonymous referee that the following gradient estimate is well known to experts. We would like to thank the referee for his/her generosity for sharing this theorem as well as its proof.
\begin{theo}\label{coro}
Let $\Omega$ be a bounded, strictly convex domain in $\mathbb{R}^n$ for $n\geq 3$. Let $\varphi\in C^{1,1}(\partial\Omega)$ and let $\Theta\in \left(-\frac{n\pi}{2},\frac{n\pi}{2}\right)$ be any constant. Let $u$ be a viscosity solution of the Dirichlet problem
\begin{align*}
\begin{cases}
F(D^2u)=\Theta,\quad \textit{in }\Omega,\\
u=\varphi,  \quad\textit{on } \partial\Omega.
\end{cases}
\end{align*}
Then we have
\begin{align*}
\|u\|_{C^{0,1}(\overline{\Omega})}\leq C,
\end{align*}
where $C$ depends only on $n$, $\Omega$ and $\|\varphi\|_{C^{1,1}(\partial\Omega)}$.
\end{theo}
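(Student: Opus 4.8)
The plan is to reduce the global Lipschitz bound to two ingredients: (i) a boundary gradient estimate $|u(x)-u(x_0)|\le C|x-x_0|$ for $x\in\overline\Omega$, $x_0\in\partial\Omega$, and (ii) a propagation of this estimate into the interior by a translation argument; the $C^0$ bound then follows for free from $u=\varphi$ on $\partial\Omega$ and boundedness of $\Omega$. For (i), fix $x_0\in\partial\Omega$, let $\ell_{x_0}$ be the affine function agreeing with $\varphi$ and its tangential gradient at $x_0$, and let $\rho$ be a strictly convex defining function of $\Omega$, so that $\rho<0$ in $\Omega$, $\rho=0$ on $\partial\Omega$, and $D^2\rho\ge\kappa I$ for some $\kappa=\kappa(\Omega)>0$ — this is exactly where strict convexity of $\Omega$ enters. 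Set $w^\pm(x)=\ell_{x_0}(x)\pm\big(b\,|x-x_0|^2-a\,\rho(x)\big)$. Choosing $b\sim\|\varphi\|_{C^{1,1}(\partial\Omega)}$ makes $w^-\le\varphi\le w^+$ on $\partial\Omega$, while choosing $a$ large makes $D^2w^-=aD^2\rho-2bI$ so positive, and $D^2w^+=2bI-aD^2\rho$ so negative, that $F(D^2w^-)\ge\Theta\ge F(D^2w^+)$ (here one only uses $\Theta\in(-\tfrac{n\pi}{2},\tfrac{n\pi}{2})$ and that $F$ ranges in $(-\tfrac{n\pi}{2},\tfrac{n\pi}{2})$). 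Thus $w^-$ and $w^+$ are a viscosity sub- and supersolution with $w^-\le\varphi=u\le w^+$ on $\partial\Omega$; the comparison principle for the special Lagrangian equation gives $w^-\le u\le w^+$ in $\overline\Omega$, and since $w^\pm(x_0)=u(x_0)$ we read off $|u(x)-u(x_0)|\le C|x-x_0|$, with $C$ controlled by $n$, $\Omega$ and $\|\varphi\|_{C^{1,1}(\partial\Omega)}$.

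For (ii) I would use the sliding method, and this is the place where constancy of the phase is crucial. Given $h\in\mathbb{R}^n$ small, put $\Omega_h=\Omega\cap(\Omega-h)$ and $v(x)=u(x+h)+C|h|$, with $C$ the constant from (i). Because $F$ depends only on $D^2u$ and $\Theta$ is a constant, $v$ is again a viscosity solution of $F(D^2v)=\Theta$. For $x\in\partial\Omega_h$ at least one of $x$, $x+h$ lies on $\partial\Omega$, so step (i) gives $|u(x+h)-u(x)|\le C|h|$ there, whence $v\ge u$ on $\partial\Omega_h$. The comparison principle yields $v\ge u$ in $\Omega_h$, i.e. $u(x+h)-u(x)\ge-C|h|$; exchanging $h$ and $-h$ gives the reverse inequality, so $u\in C^{0,1}(\overline\Omega)$ with the stated bound. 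Note that no concavity of $F$ has been used — only barriers and the comparison principle — which is precisely why the estimate is insensitive to whether the phase is subcritical, critical or supercritical.

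I expect the barrier construction in (i) to be the main obstacle: one must produce admissible sub- and supersolutions respecting merely $C^{1,1}$ boundary data, and the strictly convex defining function $\rho$, with its uniform lower Hessian bound, is what keeps $D^2w^\pm$ under control; some care is also needed to keep the constant from degenerating as $\Theta\to\pm\tfrac{n\pi}{2}$ (e.g. by a Lewy-type rotation of the Lagrangian graph reducing to a bounded phase). A secondary technical point is the comparison principle for the degenerate elliptic operator $F$ with continuous right-hand side: since $F$ is not strictly elliptic one cannot invoke it blindly, but it does hold here — e.g. via the standard perturbation $u-\varepsilon|x|^2$ together with the strict monotonicity of $\arctan$, or by the comparison principle already used for the uniqueness statement in Theorem~\ref{theorem}. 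The remaining ingredients — extracting the Lipschitz bounds on $w^\pm$ and running the sliding argument — are routine.
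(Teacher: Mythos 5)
Your proposal is correct in substance, and its second half is exactly the paper's mechanism: since the phase is constant, $u(\cdot+h)+C|h|$ solves the same equation, and the comparison principle on $\Omega\cap(\Omega-h)$ pushes the Lipschitz bound from the boundary to the interior (the paper phrases this as $\max_{\overline{\Omega_{\epsilon,j}}}|u_{\epsilon,j}-u|\le\max_{\partial\Omega_{\epsilon,j}}|u_{\epsilon,j}-u|$ and lets $\epsilon\to 0$). Where you genuinely differ is the boundary gradient estimate. You construct explicit classical barriers $w^{\pm}=\ell_{x_0}\pm\bigl(b|x-x_0|^2-a\rho\bigr)$ from a strictly convex defining function, while the paper first reduces to $|\Theta|<\frac{(n-2)\pi}{2}$ (the complementary range being precisely Lemmas \ref{subsolution} and \ref{supsolution}, quoted from \cite{B2}) and then sandwiches $u$ between the viscosity solutions $\underline{u},\overline{u}$ for two \emph{fixed} supercritical constants $\underline{\Theta},\overline{\Theta}$, importing their Lipschitz bounds from \cite{B2}. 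Your route is more self-contained (no auxiliary Dirichlet problems are needed), and for $|\Theta|\le\tau<\frac{n\pi}{2}$ it yields the bound with $C$ depending additionally only on $\tau$, since $a$ need only satisfy $n\arctan(a\kappa-2b)\ge\tau$; in particular on the range $|\Theta|\le\frac{(n-2)\pi}{2}$, where the paper's own argument operates, your constant has exactly the stated dependence. The remaining details are as routine as you say: the inequality $|\varphi-\ell_{x_0}|\le b|x-x_0|^2$ on $\partial\Omega$ involves the curvature of $\partial\Omega$ as well as $\|\varphi\|_{C^{1,1}(\partial\Omega)}$ (harmless, since $C$ may depend on $\Omega$), and the comparison principle for constant phase is available (Theorem 6.1 of \cite{B2}, which the paper also invokes).

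One correction to your closing worry: the degeneration of the constant as $|\Theta|\to\frac{n\pi}{2}$ is not a defect of your barriers, and the suggested Lewy-type rotation cannot remove it. Take $\Omega=B_1$, $\varphi\equiv 0$ and $\Theta=n\bigl(\frac{\pi}{2}-\delta\bigr)$; the unique solution is $u=\frac{1}{2}\cot(\delta)\bigl(|x|^2-1\bigr)$, whose Lipschitz norm $\cot\delta$ blows up while $n$, $\Omega$ and $\|\varphi\|_{C^{1,1}(\partial\Omega)}$ stay fixed. So any correct constant must also involve the distance from $|\Theta|$ to $\frac{n\pi}{2}$; the same dependence is implicit in the bounds imported from \cite{B2} in Lemmas \ref{subsolution} and \ref{supsolution}, which is how the paper's statement absorbs the range $|\Theta|\ge\frac{(n-2)\pi}{2}$. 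You should therefore either state your result with $C=C(n,\Omega,\|\varphi\|_{C^{1,1}(\partial\Omega)},\tau)$ for $|\Theta|\le\tau$, or handle $|\Theta|\ge\frac{(n-2)\pi}{2}$ by citation as the paper does, rather than attempt a rotation argument (which in any case changes the domain and the boundary data nontrivially).
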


\begin{rema}
Recall that the singular solutions $u_m$ in \cite{WdY13} satisfy $u_m\in C^{1,\frac{1}{2m-1}}$, but $u_m\notin C^{1,\delta}$ for any $\delta>\frac{1}{2m-1}$, where $m=2,3,\cdots$. The question whether one can improve the Lipschitz regularity in Theorem \ref{coro} to $C^1$ remains for further study.
\end{rema}

Interior $C^2$ estimate for equation (\ref{SL-original}) with $|\Theta|\geq (n-2)\frac{\pi}{2}$ was obtained via works of Warren and Yuan \cite{WY09, WY10} and Wang and Yuan \cite{WdY14}. Interior gradient estimate for equation (\ref{SL-original}) with $|\Theta|\geq (n-2)\frac{\pi}{2}$ was obtained by  Warren and Yuan \cite{WY10}. Interior $C^2$ and gradient estimates for equation (\ref{SL}) in the supercritical phase case $|\theta(x)|> (n-2)\frac{\pi}{2}$ were obtained by Bhattacharya \cite{B1}.  Regularity of convex solution to (\ref{SL-original}) was obtained by Chen, Warren and Yuan \cite{CWY} and by Chen, Shankar and Yuan \cite{CSY} for viscosity solution. Regularity of convex viscosity solution to (\ref{SL}) was obtained by Bhattacharya and Shankar \cite{BS1,BS2}.

To prove the interior $C^2$ estimate, we follow the strategy developed in \cite{WY09,WY10,WdY14}. A key ingredient in our proof is a Jacobi inequality for the largest eigenvalue $\lambda_1$. It was derived via careful study of third order terms. Apart from lack of concavity as mentioned above, we also encounter two new difficulties compared to supercritical phase case and constant phase case. In the supercritical phase case $\theta(x)> (n-2)\frac{\pi}{2}$, we can easily obtain that the smallest eigenvalue $\lambda_n>-C$. This property played an essential role in \cite{B1}. Such property is not available in the critical and supercritical phase case. In principle, the smallest eigenvalue $\lambda_n$ may tends to $-\infty$ while all other eigenvalues $\lambda_i$ tends to $\infty$. Consequently, all coefficients of the linearized operator may be highly degenerate. Compared to (\ref{SL-original}),  we have extra terms involving $\theta(x)$ when we differentiate the equation (\ref{SL}). To overcome these difficulties, we make full use of the Lagrangian phase operator and treat these extra terms with great delicacy. Another feature of our proof is the use of viscosity solution in the case of multiplicity of largest eigenvalue $\lambda_1$, following the work of Brendle, Choi and Daskalopoulos \cite{BCD}. This provides a much more elegant way to derive the inequality.

The organization of the paper is as follows. In section 2, we collect some basic properties relating to Lagrangian submanifold and Lagrangian phase operator. In section 3, we derive the crucial interior $C^2$ estimate and prove Theorem \ref{thm-C^2-intro}. In section 3, we derive the interior to boundary gradient estimate. In section 4, we complete the full a priori estimates and prove Theorem \ref{theorem} and Theorem \ref{coro}.

\section{Preliminaries}

We first recall some background for Lagrangian phase equation, see \cite{B1, HL} for details.

Let $X=(x,Du(x))$ be a Lagrangian submanifold in $\mathbb{R}^n\times\mathbb{R}^n$ for $n\geq 3$, then the metric is given by
\begin{align*}
g=I_n+(D^2u)^2.
\end{align*}

The volume form, gradient and inner product with respect to the metric $g$ are given by
\begin{align*}
dv_g=\sqrt{\det g}dx,\quad &\nabla_gv=\sum_{i,j} g^{ij}v_iX_j,\\
\left\langle \nabla_gv,\nabla_gw\right\rangle_g&=\sum_{i,j} g^{ij}v_iw_j.
\end{align*}

Let $\theta\in (-\frac{n}{2}\pi,\frac{n}{2}\pi)$ be the phase function, suppose $D^2u$ satisfies
\begin{align}\label{SL-Pre}
F(D^2u)=\sum_{i=1}^n \arctan(\lambda_i)=\theta,
\end{align}
where $\lambda_i$'s are the eigenvalues of $D^2u$.

Then the Laplace-Beltrami operator of the metric $g$ is given by
\begin{align*}
\Delta_g=\sum_{i,j} g^{ij}\partial_{ij}-\sum_{j,p,q} g^{jp}\theta_q u_{pq}\partial_j.
\end{align*}

The mean curvature vector is given by
\begin{align*}
H=J\nabla_g\theta,
\end{align*}
where $J$ is the complex structure or the $\frac{\pi}{2}$ rotation matrix in $\mathbb{R}^n\times\mathbb{R}^n$.

\medskip

We now collect some basic properties of Lagrange phase operator.
\begin{lemm}\label{F}
Let $F$ be the Lagrangian phase operator, then we have
\begin{align*}
F^{ij}&=\frac{\partial F}{\partial u_{ij}}=\begin{cases}
\frac{1}{1+\lambda_i^2},\quad &i=j,\\
0, &i\neq j,
\end{cases}\\
F^{ij,kl}&=\frac{\partial^2F}{\partial u_{ij}\partial u_{kl}}=\begin{cases}
-\frac{2\lambda_i}{(1+\lambda_i^2)^2},\quad &i=j=k=l,\\
-\frac{\lambda_i+\lambda_j}{(1+\lambda_i^2)(1+\lambda_j^2)}, & i=l,j=k,\\
0, & \textit{otherwise}.
\end{cases}
\end{align*}
In particular,
\begin{align*}
F^{ij,ji}=\frac{F^{ii}-F^{jj}}{\lambda_i-\lambda_j},\quad i\neq j,\quad \lambda_i\neq \lambda_j.
\end{align*}
\end{lemm}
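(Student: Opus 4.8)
The plan is to prove Lemma~\ref{F} by directly computing the first and second derivatives of the symmetric function $F(D^2u) = \sum_i \arctan(\lambda_i)$ with respect to the matrix entries $u_{ij}$, working at a point where $D^2u$ has been diagonalized. First I would recall the standard formulas for derivatives of eigenvalues of a symmetric matrix: if $A$ is symmetric with distinct eigenvalues and orthonormal eigenbasis, then at a diagonal $A$ one has $\partial \lambda_i / \partial A_{kl} = \delta_{ik}\delta_{il}$, so that for $f(A) = \sum_i h(\lambda_i)$ with $h = \arctan$ we get $F^{ij} = h'(\lambda_i)\,\delta_{ij} = \frac{\delta_{ij}}{1+\lambda_i^2}$, since $h'(t) = \frac{1}{1+t^2}$. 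This gives the first formula.

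For the second derivatives, I would use the well-known expansion (see e.g. the derivative-of-eigenvalue literature, or a direct perturbation computation) that for $f(A) = \sum_i h(\lambda_i)$ at a diagonal matrix,
\begin{align*}
\frac{\partial^2 f}{\partial A_{ij}\partial A_{kl}} = h''(\lambda_i)\,\delta_{ij}\delta_{kl}\delta_{ik} + \sum_{p\neq q}\frac{h'(\lambda_p)-h'(\lambda_q)}{\lambda_p-\lambda_q}\cdot(\text{terms pairing off-diagonal indices}).
\end{align*}
Concretely, the nonzero components are the "diagonal-diagonal" one $F^{ii,ii} = h''(\lambda_i) = -\frac{2\lambda_i}{(1+\lambda_i^2)^2}$, and the "off-diagonal" one $F^{ij,ji} = \frac{h'(\lambda_i)-h'(\lambda_j)}{\lambda_i-\lambda_j}$ for $i\neq j$. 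The latter, after plugging in $h'(t) = \frac{1}{1+t^2}$, simplifies:
\begin{align*}
\frac{\frac{1}{1+\lambda_i^2}-\frac{1}{1+\lambda_j^2}}{\lambda_i-\lambda_j} = \frac{(1+\lambda_j^2)-(1+\lambda_i^2)}{(\lambda_i-\lambda_j)(1+\lambda_i^2)(1+\lambda_j^2)} = \frac{\lambda_j^2-\lambda_i^2}{(\lambda_i-\lambda_j)(1+\lambda_i^2)(1+\lambda_j^2)} = -\frac{\lambda_i+\lambda_j}{(1+\lambda_i^2)(1+\lambda_j^2)},
\end{align*}
which is exactly the claimed value. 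All other second-derivative components vanish because $\arctan$ is applied to eigenvalues separately and mixed off-diagonal perturbations in unrelated index pairs do not contribute at a diagonal point. The final displayed identity $F^{ij,ji} = \frac{F^{ii}-F^{jj}}{\lambda_i-\lambda_j}$ is then immediate by comparing the two computations: $F^{ii} = \frac{1}{1+\lambda_i^2}$, so $F^{ii}-F^{jj} = \frac{1}{1+\lambda_i^2}-\frac{1}{1+\lambda_j^2}$, and dividing by $\lambda_i-\lambda_j$ reproduces the off-diagonal formula.

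I expect the only delicate point to be justifying the second-derivative formula cleanly — either by citing the standard reference for derivatives of spectral functions, or by a short self-contained perturbation argument expanding the eigenvalues of $A + tE$ to second order in $t$ and collecting terms; the subtlety is keeping track of which index pairs $(i,j)$ and $(k,l)$ interact, and observing that the symmetry of $D^2u$ forces $F^{ij,kl}$ to be invariant under $i\leftrightarrow j$, $k\leftrightarrow l$, and $(ij)\leftrightarrow(kl)$. Since the lemma is only used later at a point where $D^2u$ is diagonalized, it suffices to establish the formulas in that basis, so no coordinate-invariance argument is needed beyond this. I would remark that the case $\lambda_i = \lambda_j$ in the last identity is handled by continuity (or simply excluded, as stated), since the limit of $\frac{h'(\lambda_i)-h'(\lambda_j)}{\lambda_i-\lambda_j}$ as $\lambda_j\to\lambda_i$ is $h''(\lambda_i)$, consistent with the diagonal-diagonal term.
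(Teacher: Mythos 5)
Your proposal is correct and follows the standard derivation of first and second derivatives of a symmetric function of eigenvalues at a diagonal point, which is exactly the (unwritten) computation the paper relies on when stating Lemma \ref{F} as a known preliminary. The algebra for $h=\arctan$, the off-diagonal quotient $\frac{h'(\lambda_i)-h'(\lambda_j)}{\lambda_i-\lambda_j}$, and the continuity remark for $\lambda_i=\lambda_j$ are all accurate.
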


For Lagrangian phase equation with critical and supercritical phase, we have the following lemma, see \cite{WdY14,Y06} for details.
\begin{lemm}\label{sigma_{n-1}}
Suppose $\lambda_1\geq \lambda_2\geq \cdots\geq \lambda_n$ satisfying (\ref{SL-Pre}) with $\theta\geq \frac{(n-2)\pi}{2}$. Then
\begin{enumerate}
\item $\lambda_1\geq \lambda_2\geq \cdots\geq \lambda_{n-1}>0$, $\lambda_{n-1}\geq |\lambda_n|$.
\item $\lambda_1+(n-1)\lambda_n\geq 0$.
\item $\sigma_k(\lambda_1,\cdots,\lambda_n)\geq 0$ for all $1\leq k\leq n-1$.
\item For any constant $c$ with $|c|\geq \frac{(n-2)\pi}{2}$, the level set $\Gamma=\{\lambda\in \mathbb{R}^n:\sum_{i=1}^n \arctan (\lambda_i)=c\}$
is convex.
\end{enumerate}
\end{lemm}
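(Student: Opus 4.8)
The plan is to establish the four parts in order, with parts (2)--(4) all reduced to one elementary inequality. Throughout I write $\lambda_1\ge\cdots\ge\lambda_n$ and $\theta=\sum_{i}\arctan\lambda_i\in[(n-2)\tfrac{\pi}{2},\tfrac{n\pi}{2})$.

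Part (1) I would do directly from $\arctan\lambda_i<\tfrac{\pi}{2}$: if $\lambda_{n-1}\le 0$ then $\arctan\lambda_{n-1}\le 0$ and $\arctan\lambda_n\le 0$, forcing $\theta<(n-2)\tfrac{\pi}{2}\le\theta$, impossible; hence $\lambda_1\ge\cdots\ge\lambda_{n-1}>0$. If moreover $\lambda_n<0$ and $\lambda_{n-1}<|\lambda_n|$ then $\arctan\lambda_{n-1}+\arctan\lambda_n<0$ (arctangent being odd and increasing), and again $\theta<(n-2)\tfrac{\pi}{2}\le\theta$; so $\lambda_{n-1}\ge|\lambda_n|$.

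The key inequality is
\[
\sum_{i=1}^{n-1}\frac{1}{\lambda_i}\ \le\ \frac{1}{|\lambda_n|}\qquad\text{whenever }\lambda_n<0;
\]
I will refer to this as $(\star)$. To prove $(\star)$: by (1) each $\lambda_i>0$ for $i\le n-1$, so $\arctan\frac{1}{\lambda_i}=\frac{\pi}{2}-\arctan\lambda_i$, whence $\sum_{i=1}^{n-1}\arctan\frac{1}{\lambda_i}=(n-1)\frac{\pi}{2}-\theta-\arctan|\lambda_n|\le\frac{\pi}{2}-\arctan|\lambda_n|=\arctan\frac{1}{|\lambda_n|}$, using $\theta\ge(n-2)\frac{\pi}{2}$; since $\arctan$ is subadditive on $[0,\infty)$ (concave, $\arctan 0=0$), $\arctan\big(\sum_{i\le n-1}\lambda_i^{-1}\big)\le\sum_{i\le n-1}\arctan\lambda_i^{-1}\le\arctan|\lambda_n|^{-1}$, and applying $\tan$ yields $(\star)$. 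Now (2): if $\lambda_n\ge0$ it is trivial; if $\lambda_n<0$, then $\arctan\frac{1}{\lambda_1}$ is the least of the numbers $\arctan\frac{1}{\lambda_i}$ $(i\le n-1)$, hence $\le\frac{1}{n-1}\arctan\frac{1}{|\lambda_n|}$, and convexity of $\tan$ on $[0,\frac{\pi}{2})$ with $\tan 0=0$ gives $\frac{1}{\lambda_1}\le\frac{1}{n-1}\cdot\frac{1}{|\lambda_n|}$, i.e. $\lambda_1+(n-1)\lambda_n\ge0$. And (3): again trivial if $\lambda_n\ge0$; if $\lambda_n=-\mu<0$, set $\hat\lambda=(\lambda_1,\dots,\lambda_{n-1})$ (all positive), so $\sigma_k(\lambda)=\sigma_k(\hat\lambda)-\mu\,\sigma_{k-1}(\hat\lambda)=\sigma_{k-1}(\hat\lambda)\big(\sigma_k(\hat\lambda)/\sigma_{k-1}(\hat\lambda)-\mu\big)$ with $\sigma_{k-1}(\hat\lambda)>0$; Newton's inequalities make $k\mapsto\sigma_k(\hat\lambda)/\sigma_{k-1}(\hat\lambda)$ non-increasing on $1\le k\le n-1$, so $\sigma_k(\hat\lambda)/\sigma_{k-1}(\hat\lambda)\ge\sigma_{n-1}(\hat\lambda)/\sigma_{n-2}(\hat\lambda)=\big(\sum_{i\le n-1}\lambda_i^{-1}\big)^{-1}\ge\mu$ by $(\star)$, and therefore $\sigma_k(\lambda)\ge0$.

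For (4), by the symmetry $\lambda\mapsto-\lambda$ it suffices to show $\{F\ge c\}$ is convex for $c\ge(n-2)\frac{\pi}{2}$, where $F(\lambda)=\sum_i\arctan\lambda_i$. This set is closed; and since $s\mapsto F(\lambda+s(1,\dots,1))$ is increasing with limit $\frac{n\pi}{2}$, it is path-connected, so by the Tietze--Nakajima theorem it is enough to check local convexity at each boundary point, namely $\langle D^2F(\lambda)v,v\rangle\le0$ whenever $F(\lambda)=c$ and $\langle\nabla F(\lambda),v\rangle=0$. Since $F_i=(1+\lambda_i^2)^{-1}>0$ and $F_{ii}=-2\lambda_i(1+\lambda_i^2)^{-2}$ with the off-diagonal second derivatives vanishing, putting $w_i=F_iv_i$ this becomes: $\sum_i w_i=0\ \Rightarrow\ \sum_i\lambda_iw_i^2\ge0$. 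If $\lambda_n\ge0$ this is clear; if $\lambda_n=-\mu<0$ then Cauchy--Schwarz gives $w_n^2=\big(\sum_{i\le n-1}w_i\big)^2\le\big(\sum_{i\le n-1}\lambda_iw_i^2\big)\big(\sum_{i\le n-1}\lambda_i^{-1}\big)\le\mu^{-1}\sum_{i\le n-1}\lambda_iw_i^2$ by $(\star)$, so $\sum_i\lambda_iw_i^2=\sum_{i\le n-1}\lambda_iw_i^2-\mu w_n^2\ge0$. (Alternatively one may simply cite \cite{Y06} for part (4).) The only point I expect to be genuinely delicate is this last passage, from the pointwise infinitesimal convexity of the level hypersurface to global convexity of $\{F\ge c\}$; everything else rests on the elementary inequality $(\star)$, which is nothing but the (sub)additivity of $\arctan$.
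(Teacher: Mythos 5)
Your proposal is correct. Note, though, that the paper itself offers no argument for this lemma: it is stated with a pointer to \cite{WdY14,Y06}, so the honest comparison is with those references rather than with a proof in the text. Your inequality $(\star)$, $\sum_{i\le n-1}\lambda_i^{-1}\le |\lambda_n|^{-1}$ when $\lambda_n<0$, obtained from $\arctan\frac{1}{\lambda_i}=\frac{\pi}{2}-\arctan\lambda_i$, subadditivity of $\arctan$ and $\theta\ge\frac{(n-2)\pi}{2}$, is precisely the mechanism underlying the cited proofs, and your part (4) is Yuan's tangential-Hessian (quasiconcavity) computation, so in substance you have reconstructed the cited arguments rather than found a new route. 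A few small remarks: part (2) follows from $(\star)$ immediately via $\frac{n-1}{\lambda_1}\le\sum_{i\le n-1}\frac{1}{\lambda_i}\le\frac{1}{|\lambda_n|}$, so the detour through convexity of $\tan$ is unnecessary (though correct); part (3) via log-concavity of the $\sigma_k$ (Newton) together with $\sigma_{n-1}(\hat\lambda)/\sigma_{n-2}(\hat\lambda)=\bigl(\sum_{i\le n-1}\lambda_i^{-1}\bigr)^{-1}\ge|\lambda_n|$ is clean and correct; and in part (4) the passage you rightly flag as delicate is fine as stated, because the condition $\langle D^2F\,v,v\rangle\le 0$ on tangent vectors is verified at \emph{every} point of the level set (not just at one point), which is what makes the local-convexity-plus-Tietze--Nakajima argument legitimate; the connectedness step could be spelled out by joining two points through the convex corner region $\{\lambda_i\ge R\ \forall i\}\subset\{F\ge c\}$, and of course one may instead simply cite \cite{Y06} for this part, as the paper does.
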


\section{Interior $C^2$ estimate}

In this section, we will prove the interior $C^2$ estimate for Lagrange phase equation with critical and supercritical phase. We will prove the following theorem which is a litter bit stronger than Theorem \ref{thm-C^2-intro}.

\begin{theo}\label{thm-C^2}
Let $B_2$ be a ball of radius $2$ in $\mathbb{R}^n$ for $n\geq 3$. Let $\theta\in C^{1,1}(B_2\times \mathbb{R})$ with $|\theta(x,u)|\in \big[ \frac{(n-2)\pi}{2}, \frac{n\pi}{2}\big)$ and let $u\in C^4(B_2)$ be a solution of
\begin{align}\label{SL-C^2}
F(D^2u)=\theta(x,u).
\end{align}
Then we have
\begin{align*}
|D^2u(0)|\leq C,
\end{align*}
where $C$ depends only on $n,\|u\|_{C^{0,1}(\overline{B}_1)}$ and $\|\theta\|_{C^{1,1}(\overline{B}_1\times [-M,M])}$. Here $M$ is a universal constant satisfying $\|u\|_{L^\infty(\overline{B}_1)}\leq M$.
\end{theo}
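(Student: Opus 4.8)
The plan is to establish a differential (Jacobi-type) inequality for the largest eigenvalue $\lambda_1$ of $D^2u$ with respect to the Laplace–Beltrami operator $\Delta_g$ of the induced metric $g = I_n + (D^2u)^2$, and then run a standard maximum-principle argument on a suitable test function on $B_1$. Concretely, I would work with a smooth exhaustion quantity such as $b = \log \lambda_1$ (or $\log(\lambda_1 + K)$ for a normalizing constant $K$), which is well defined away from eigenvalue multiplicity; the $C^2$ bound then follows from $\sup_{B_{1/2}}$ control of $\lambda_1$ together with the structural facts in Lemma 2.3, namely $\lambda_1 \ge \cdots \ge \lambda_{n-1} > 0$, $\lambda_{n-1} \ge |\lambda_n|$, and $\lambda_1 + (n-1)\lambda_n \ge 0$, which bound all eigenvalues in terms of $\lambda_1$.

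First I would record the differentiated equations. Differentiating \eqref{SL-C^2} once gives $F^{ij} u_{ije} = \theta_e = \theta_{x_e} + \theta_u u_e$, and differentiating twice gives $F^{ij} u_{ijee} + F^{ij,kl} u_{ije} u_{kle} = \theta_{ee}$, where the right-hand side is controlled by $\|\theta\|_{C^{1,1}}$, the gradient bound $\|u\|_{C^{0,1}}$, and — crucially for the extra $\theta_u$ terms absent in the constant-phase case — the already-available Lipschitz bound on $u$. The key analytic step is to compute $\Delta_g \lambda_1$ at a point where $\lambda_1$ is simple: using $g^{ij} = F^{ij}(1+\lambda_i^2)$ one relates $\Delta_g$ to the linearized operator $L = F^{ij}\partial_{ij}$ plus a first-order drift term coming from $H = J\nabla_g\theta$, and the second-derivative formula for a simple eigenvalue introduces the off-diagonal second variation $F^{ij,ji} = (F^{ii}-F^{jj})/(\lambda_i - \lambda_j)$ from Lemma 2.2. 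Collecting terms, one aims to prove an inequality of the schematic form
\begin{align*}
\Delta_g \log \lambda_1 \ \ge\ c(n)\, |\nabla_g \log \lambda_1|_g^2 \ -\ C,
\end{align*}
with $c(n) > 0$, $C$ depending on the allowed data. The positive gradient term on the right is what makes the argument work: on the test function $w = \eta^2 \lambda_1$ (or $\eta^2 e^{\lambda_1}$, $\eta$ a cutoff supported in $B_1$), the good term absorbs the bad cross terms $\nabla_g \eta \cdot \nabla_g \lambda_1$ produced by the cutoff at an interior maximum, yielding $\sup w \le C$.

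The main obstacle — and the place where the critical-phase case genuinely differs from both the supercritical case of Bhattacharya \cite{B1} and the constant-phase case of \cite{WY09,WY10,WdY14} — is the careful sign analysis of the third-order terms $F^{ij,kl}u_{ije}u_{kle}$ and their interaction with the Hessian term in $\Delta_g \lambda_1$. In the supercritical case one has the cheap lower bound $\lambda_n > -C$, which keeps the coefficients $F^{ij} = (1+\lambda_i^2)^{-1}$ from degenerating in a controlled way; here $\lambda_n$ may go to $-\infty$ while $\lambda_1,\dots,\lambda_{n-1}\to\infty$, so every coefficient of $L$ can be small and one must extract the gradient-squared term from the delicate combination of $-2\lambda_i F^{ii}u_{iie}^2/(1+\lambda_i^2)$ and $-2 F^{ij,ji} u_{ije}^2$ summands, using the critical-phase constraints $\lambda_1+(n-1)\lambda_n \ge 0$ and $\sigma_k \ge 0$ ($k \le n-1$) to control the dangerous $i=n$ contributions. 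To handle the possible multiplicity of $\lambda_1$, where $\lambda_1$ fails to be smooth, I would (following Brendle–Choi–Daskalopoulos \cite{BCD}) interpret $\lambda_1$ as a viscosity supersolution of the Jacobi inequality — testing against smooth functions from below and perturbing the ambient quadratic form so that the top eigenvalue becomes simple for the perturbed matrix — which legitimizes the maximum-principle step without case analysis on the eigenvalue gaps. Finally, once $\lambda_1 \le C$ on $B_{1/2}$, the eigenvalue bounds from Lemma 2.3 give $|D^2u(0)| \le C$, and the scaling/covering reduces the stated ball radii to the normalized $B_2 \supset B_1$ setup; the dependence on $M$ enters only through $\|\theta\|_{C^{1,1}(\overline{B}_1\times[-M,M])}$, as claimed.
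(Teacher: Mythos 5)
Your first half — the pointwise Jacobi inequality for $b=\log\lambda_1$, derived from the differentiated equation, the structural facts of Lemma \ref{sigma_{n-1}}, and the Brendle--Choi--Daskalopoulos viscosity device at points of eigenvalue multiplicity — is exactly the paper's Lemma \ref{max} (one small correction: $b$ satisfies $\sum_iF^{ii}b_{ii}\ge c(n)\sum_iF^{ii}b_i^2-C$ as a viscosity \emph{subsolution}, so one tests with smooth functions touching from \emph{above}, not below). The genuine gap is in how you propose to close: a cutoff maximum-principle argument on $w=\eta^{2}\lambda_1$ (or $\eta^{2p}e^{b}$) cannot extract a bound on $\lambda_1$ from the Jacobi inequality. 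At an interior maximum of $\eta^{2p}e^{b}$ one has $\nabla_g b=-2p\,\nabla_g\log\eta$, and inserting this into the Jacobi inequality gives $0\ \ge\ 2p\,\Delta_g\eta/\eta+(4c(n)p^{2}-2p)\,|\nabla_g\eta|^{2}/\eta^{2}-C$, an inequality from which $\lambda_1$ has disappeared entirely, so no estimate follows; and for $p=1$ the cutoff term $-2|\nabla_g\eta|^{2}/\eta^{2}$ is not even absorbed, since $c(n)$ is small. Put differently, any huge constant $b\equiv K$ satisfies the schematic inequality $\Delta_g b\ge c(n)|\nabla_g b|_g^{2}-C$, so this differential inequality alone, fed into any local maximum principle, can never yield an upper bound for $b$. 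A further symptom that the closing step cannot be right is that in your scheme $\|u\|_{C^{0,1}}$ enters only through the $\theta_u u_e$ terms, whereas the $C^2$ bound must use the gradient bound in an essential way.

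What the paper does instead is integrate the inequality: it first passes from the viscosity formulation to the distributional one via Ishii's theorem \cite{I}, then integrates by parts against nonnegative test functions to obtain the integral Jacobi inequality of Lemma \ref{integral jacobi}, namely $-\int\langle\nabla_g\phi,\nabla_g b\rangle_g\,dv_g\ \ge\ c(n)\int\phi\,|\nabla_g b|^{2}dv_g-C\int\phi\,dv_g$. The $C^2$ bound then follows from the Warren--Yuan/Wang--Yuan machinery \cite{WY09,WY10,WdY14,B1}: a Moser-type iteration based on the Michael--Simon Sobolev/mean-value inequality \cite{MS} on the Lagrangian graph (whose mean curvature is controlled by $\|\theta\|_{C^{1,1}}$ and $\|u\|_{C^{0,1}}$) bounds $\sup_{B_{1/2}}b$ by an integral of $b$, and that integral is in turn controlled using the Lipschitz bound on $u$ together with the divergence structure afforded by Lemma \ref{sigma_{n-1}} — this is precisely where $\|u\|_{C^{0,1}}$ enters. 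So you should replace the cutoff/maximum-principle step by the integral Jacobi inequality plus this Sobolev/iteration argument, and make the viscosity-to-distribution passage explicit, since integrating by parts against a merely viscosity inequality is not justified as stated.
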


The following is the Jacobi inequality, which is the key for the interior $C^2$ estimate.
\begin{lemm}\label{max}
Let $B_2$ be a ball of radius $2$ in $\mathbb{R}^n$ for $n\geq 3$. Let $\theta\in C^{1,1}(B_2\times \mathbb{R})$ with $\theta(x,u)\in \big[ \frac{(n-2)\pi}{2}, \frac{n\pi}{2}\big)$ and let $u\in C^4(B_2)$ be a solution of (\ref{SL-C^2}). For any $x_0\in B_1$, suppose that $D^2u$ is diagonalized at $x_0$ such that $\lambda_1\geq  \cdots\geq \lambda_n$ and $\lambda_1\geq \Lambda(n)$, where $\Lambda(n)$ is a large constant depending only on $n$. Set $b=\ln\lambda_1$. Then at $x_0$, we have
\begin{align*}
\sum_i F^{ii}b_{ii}\geq c(n)\sum_i F^{ii}b_i^2-C,
\end{align*}
in the viscosity sense, where $c(n)$ depends only on $n$ and $C$ depends only on $n,\|u\|_{C^{0,1}(\overline{B}_1)}$ and $\|\theta\|_{C^{1,1}(\overline{B}_1\times [-M,M])}$. Here $M$ is a universal constant satisfying $\|u\|_{L^\infty(\overline{B}_1)}\leq M$.
\end{lemm}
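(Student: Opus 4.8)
The plan is to establish the Jacobi-type differential inequality for $b = \ln \lambda_1$ by a direct computation of first and second covariant derivatives of $\lambda_1$ with respect to the metric $g$, carefully tracking all third-order terms in $u$ and the new terms arising from the $x$- and $u$-dependence of $\theta$. First I would fix $x_0\in B_1$, diagonalize $D^2u$ there with $\lambda_1 \geq \cdots \geq \lambda_n$, and assume $\lambda_1 \geq \Lambda(n)$. When $\lambda_1$ is a simple eigenvalue, $\lambda_1$ is smooth near $x_0$ and the standard formulas give
\begin{align*}
(\lambda_1)_i &= u_{11i}, \\
(\lambda_1)_{ii} &= u_{11ii} + 2\sum_{p>1} \frac{u_{1pi}^2}{\lambda_1 - \lambda_p}.
\end{align*}
When $\lambda_1$ has higher multiplicity, I would instead work in the viscosity sense following Brendle--Choi--Daskalopoulos \cite{BCD}: test $\lambda_1$ from above by a smooth function obtained by perturbing the matrix, so that the formally singular terms $2\sum_{p: \lambda_p = \lambda_1} u_{1pi}^2/(\lambda_1 - \lambda_p)$ are replaced by a genuine nonnegative contribution that can only help the inequality. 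This is the device that makes the multiplicity case clean.

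Next I would differentiate the equation $F(D^2u) = \theta(x,u)$ once and twice. Writing $F^{ij} = \partial F/\partial u_{ij}$ and $F^{ij,kl}$ for the second derivatives (computed explicitly in Lemma \ref{F}), the first differentiation in direction $e_k$ gives $\sum_i F^{ii} u_{iik} = \theta_k + \theta_u u_k$, and differentiating again in direction $e_k$ and summing appropriately yields
\begin{align*}
\sum_i F^{ii} u_{ii11} + \sum_{i,j,k,l} F^{ij,kl} u_{ij1} u_{kl1} = \theta_{11} + 2\theta_{1u} u_1 + \theta_{uu} u_1^2 + \theta_u u_{11}.
\end{align*}
Combining with the formula for $(\lambda_1)_{ii}$ and dividing by $\lambda_1$ (to pass to $b = \ln\lambda_1$, which introduces a $-\sum_i F^{ii}(\lambda_1)_i^2/\lambda_1^2 = -\sum_i F^{ii} b_i^2$ term), and accounting for the first-order drift term $-\sum_{j,p,q} g^{jp}\theta_q u_{pq}\partial_j$ in $\Delta_g$, I get an identity of the form
\begin{align*}
\sum_i F^{ii} b_{ii} = -\sum_i F^{ii} b_i^2 + \frac{1}{\lambda_1}\Big( -\sum_{i,j,k,l} F^{ij,kl} u_{ij1}u_{kl1} + 2\sum_{p>1} F^{ii}\frac{u_{1pi}^2}{\lambda_1-\lambda_p}\Big) + (\text{lower order}).
\end{align*}
The lower-order terms are controlled by $\|\theta\|_{C^{1,1}}$ and $\|u\|_{C^{0,1}}$ using $\lambda_1 \geq \Lambda(n)$ and the elementary bound $\sum_i F^{ii} \leq n$; here one must also use that $|u_{11}/\lambda_1| \leq 1$ and similar normalizations so the $\theta_u u_{11}$ contribution stays bounded.

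The heart of the matter, and the main obstacle, is showing that the third-order bracket
\begin{align*}
\frac{1}{\lambda_1}\Big( -\sum_{i,j,k,l} F^{ij,kl} u_{ij1}u_{kl1} + 2\sum_{p>1} F^{ii}\frac{u_{1pi}^2}{\lambda_1-\lambda_p}\Big) \;\geq\; (1+c(n))\sum_i F^{ii} b_i^2 - C.
\end{align*}
Using Lemma \ref{F}, $-F^{ij,kl}u_{ij1}u_{kl1} = \sum_i \frac{2\lambda_i}{(1+\lambda_i^2)^2}u_{ii1}^2 + \sum_{i\neq j}\frac{\lambda_i+\lambda_j}{(1+\lambda_i^2)(1+\lambda_j^2)} u_{ij1}^2$, and one rewrites the $p>1$ sum with $F^{ii} = (1+\lambda_i^2)^{-1}$. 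The positive terms come from indices where the relevant eigenvalues are large and positive (which, by Lemma \ref{sigma_{n-1}}(1), is everything except possibly $\lambda_n$); the danger is entirely concentrated in the terms involving the index $n$, where $\lambda_n$ may go to $-\infty$ and $F^{nn} = (1+\lambda_n^2)^{-1}$ degenerates. The strategy, following \cite{WY09, WY10, WdY14}, is to split the $b_i^2 = u_{11i}^2/\lambda_1^2$ sum into "good" directions $i \leq n-1$ and the "bad" direction $i = n$; for $i \leq n-1$ one absorbs $F^{ii} b_i^2$ directly against the matching diagonal third-order term $\frac{2\lambda_i}{(1+\lambda_i^2)^2}u_{ii1}^2$ after using the equation to relate $u_{11i}$ to the other $u_{ppi}$, while for $i = n$ one exploits the off-diagonal terms $\frac{\lambda_1 + \lambda_n}{(1+\lambda_1^2)(1+\lambda_n^2)}u_{1n1}^2$ together with $2F^{nn}u_{1n1}^2/(\lambda_1 - \lambda_n) \sim 2 u_{1n1}^2/(\lambda_1(1+\lambda_n^2))$ and the sign information $\lambda_1 + (n-1)\lambda_n \geq 0$ from Lemma \ref{sigma_{n-1}}(2). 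This last point is exactly where the critical/supercritical hypothesis $\theta \geq \frac{(n-2)\pi}{2}$ enters in an essential way, and where the extra $\theta$-derivative terms — which must also be dominated in this degenerate regime — make the bookkeeping delicate. Once the bracket inequality with constant $(1+c(n))$ is in hand, subtracting the $-\sum_i F^{ii} b_i^2$ from the identity leaves $\sum_i F^{ii} b_{ii} \geq c(n)\sum_i F^{ii} b_i^2 - C$, as claimed.
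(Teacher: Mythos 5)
Your setup matches the paper's: the Brendle--Choi--Daskalopoulos viscosity formulas for $(\lambda_1)_i$ and $(\lambda_1)_{ii}$, the twice-differentiated equation with the $F^{ij,kl}$ terms from Lemma \ref{F}, and the observation that after dividing by $\lambda_1$ the lower-order contributions (including $\theta_u u_{11}$) are bounded. (One small slip: the drift term of $\Delta_g$ plays no role in this lemma, which concerns $\sum_i F^{ii}b_{ii}$ only; it is handled later, in the integral Jacobi inequality.) However, there is a genuine gap at the heart of the argument. After collecting coefficients, the $u_{11i}^2$ terms (equivalently the $F^{ii}b_i^2$ terms) are \emph{all} benign, including $i=n$: the off-diagonal terms $\frac{\lambda_1+\lambda_n}{(1+\lambda_1^2)(1+\lambda_n^2)}u_{11n}^2$ together with the viscosity term and $\lambda_1+(n-1)\lambda_n\geq 0$ give a positive coefficient $\geq c(n)F^{nn}/\lambda_1^2$, exactly as you say. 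The true danger is a different third derivative: the coefficient of $u_{nn1}^2$, which works out to $2\frac{1+\lambda_1\lambda_n}{\lambda_1(\lambda_1-\lambda_n)}(F^{nn})^2$ and is \emph{negative} when $\lambda_n<0$ and $\lambda_1$ is large. No other third-order term in the bracket contains $u_{nn1}$ (the off-diagonal terms you invoke involve $u_{1n1}=u_{11n}$, not $u_{nn1}$), so this term cannot be absorbed by the sign information $\lambda_1+(n-1)\lambda_n\geq 0$ alone, and your sketch never explains how it is controlled.

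The paper's resolution, which is the key idea of the proof and is absent from your proposal, is to use the \emph{once}-differentiated equation in the direction $e_1$, namely $\sum_i F^{ii}u_{ii1}=\theta_1+\theta_u u_1$, to write $t_n:=F^{nn}u_{nn1}$ as $\sum_{i\neq n}t_i$ up to bounded terms, and then to run a Cauchy--Schwarz inequality with precisely tuned weights ($\frac{2m-1-2\epsilon}{m}$ on the top eigenvalue block of multiplicity $m$, $\frac{2\lambda_i}{\lambda_1-\lambda_i}$ for $m<i<n$). The leftover factor is shown to be bounded below by $\frac{|\lambda_n|}{\lambda_1-\lambda_n}c(n)$ using the identity $\sum_i\frac{\lambda_1-\lambda_i}{2\lambda_i}=\frac{\lambda_1}{2}\frac{\sigma_{n-1}}{\sigma_n}-\frac{n}{2}$ together with $\sigma_{n-1}\geq 0$, $\sigma_n<0$ from Lemma \ref{sigma_{n-1}} and the numerical fact $\frac{m^2}{2m-1}<\frac{n}{2}$ for $m\leq n-1$, $n\geq 3$ (here the multiplicity bound $m\leq n-1$, forced by $\lambda_n<0$, is essential). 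Your phrase about ``using the equation to relate $u_{11i}$ to the other $u_{ppi}$'' gestures at the differentiated equation but applies it in the wrong place (to the good directions, where it is not needed) and omits the weighted Cauchy--Schwarz and the $\sigma_{n-1}/\sigma_n$ computation that make the bad $u_{nn1}^2$ term harmless. Without this mechanism the claimed bracket inequality, and hence the lemma, is not established.
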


\begin{proof}
Without loss of generality, we may assume $\lambda_1$ has multiplicity $m$, i.e.
\begin{align*}
\lambda_1=\cdots=\lambda_m>\lambda_{m+1}\geq \cdots\geq \lambda_n.
\end{align*}

By \cite{BCD}, we have
\begin{align*}
{\lambda_1}_i=u_{11i}=u_{lli},\quad 1<l\leq m,
\end{align*}
and
\begin{align*}
{\lambda_1}_{ii}\geq u_{11ii}+2\sum_{p>m}\frac{u_{1pi}^2}{\lambda_1 -\lambda_p},
\end{align*}
in the viscosity sense.

It follows that
\begin{align*}
b_{ii}=\frac{{\lambda_1}_{ii}}{\lambda_1}-\frac{{\lambda_1}_i^2}{\lambda_1^2}\geq \frac{u_{11ii}}{ \lambda_1}+2\sum_{p>m}\frac{u_{1pi}^2}{\lambda_1 (\lambda_1 -\lambda_p )}-\frac{ u_{11i}^2}{\lambda_{1}^2}.
\end{align*}

Thus
\begin{align*}
\sum_i F^{ii}b_{ii}\geq \sum_i F^{ii}\frac{u_{11ii}}{ \lambda_1}+2\sum_i\sum_{p>m}F^{ii}\frac{u_{1pi}^2}{\lambda_1 (\lambda_1 -\lambda_p )}-\sum_i F^{ii}\frac{ u_{11i}^2}{\lambda_{1}^2}.
\end{align*}

Differentiating (\ref{SL-C^2}), we have
\begin{align*}
\sum_i F^{ii}u_{ii11}+\sum_{p,q,r,s} F^{pq,rs}u_{pq1}u_{rs1}\geq -Cu_{11}-C,
\end{align*}
where $C$ depends only on $n,\|u\|_{C^{0,1}(\overline{B}_1)}$ and $\|\theta\|_{C^{1,1}(\overline{B}_1\times [-M,M])}$. Here $M$ is a universal constant satisfying $\|u\|_{L^\infty(\overline{B}_1)}\leq M$.

In the following, we will denote $C$ a universal constant depending only on $n,\|u\|_{C^{0,1}(\overline{B}_1)}$ and $\|\theta\|_{C^{1,1}(\overline{B}_1\times [-M,M])}$, it may change from line to line.

It follows that 
\begin{align}\label{m-1}
\sum_i F^{ii}b_{ii}\geq &  -\sum_{p,q,r,s} \frac{F^{pq,rs}u_{pq1}u_{rs1}}{\lambda_1 }+2\sum_i\sum_{p>m}F^{ii}\frac{u_{1pi}^2}{\lambda_1 (\lambda_1 -\lambda_p )}\\\nonumber
&-\sum_i F^{ii}\frac{ u_{11i}^2}{\lambda_{1}^2}-C.
\end{align}

By Lemma \ref{F}, we have
\begin{align*}
-\sum_{p,q,r,s} F^{pq,rs}u_{pq1}u_{rs1}=&-\sum_i F^{ii,ii}u_{ii1}^2-\sum_{i\neq j}F^{ij,ji}u_{ij1}^2\\
=&\ 2\sum_i \lambda_i(F^{ii})^2 u_{ii1}^2+\sum_{i\neq j}\frac{\lambda_i+\lambda_j}{(1+\lambda_i^2)(1+\lambda_j^2)} u_{ij1}^2.
\end{align*}

Plugging into (\ref{m-1}), we have
\begin{align}\label{m-2}
\sum_i F^{ii}b_{ii}\geq&\  2\sum_i \frac{\lambda_i}{\lambda_1} (F^{ii})^2 u_{ii1}^2+\sum_{i\neq j}\frac{\lambda_i+\lambda_j}{\lambda_1(1+\lambda_i^2)(1+\lambda_j^2)} u_{ij1}^2\\\nonumber
&+2\sum_i\sum_{p>m}F^{ii}\frac{u_{1pi}^2}{\lambda_1 (\lambda_1 -\lambda_p )}-\sum_i F^{ii}\frac{ u_{11i}^2}{\lambda_{1}^2}-C.
\end{align}

Note that
\begin{align*}
2\sum_i\sum_{p>m}F^{ii}\frac{u_{1pi}^2}{\lambda_1 (\lambda_1 -\lambda_p )}&\geq 2\sum_{p>m}F^{pp}\frac{u_{1pp}^2}{\lambda_1 (\lambda_1 -\lambda_p )}+ 2\sum_{p>m}F^{11}\frac{u_{1p1}^2}{\lambda_1 (\lambda_1 -\lambda_p )}\\
&=2\sum_{i>m}F^{ii}\frac{u_{ii1}^2}{\lambda_1 (\lambda_1 -\lambda_i )}+2\sum_{i>m}F^{11}\frac{u_{11i}^2}{\lambda_1 (\lambda_1 -\lambda_i )}.
\end{align*}

By Lemma \ref{sigma_{n-1}}, we have $\lambda_i+\lambda_j\geq 0$ for all $i\neq j$, together with Lemma \ref{F}, we have
\begin{align*}
&\sum_{i\neq j}\frac{\lambda_i+\lambda_j}{\lambda_1 (1+\lambda_i^2)(1+\lambda_j^2)} u_{ij1}^2\\
\geq&\ 2\sum_{1<i\leq m}  \frac{\lambda_1+\lambda_i}{\lambda_1(1+\lambda_1^2)(1+\lambda_i^2)} u_{11i}^2+ 2 \sum_{i>m} \frac{\lambda_1+\lambda_i}{\lambda_1(1+\lambda_1^2)(1+\lambda_i^2)}   u_{11i}^2\\
=&\ 4\sum_{1<i\leq m} (F^{ii})^2 u_{11i}^2+  2\sum_{i>m} \frac{F^{ii}-F^{11}}{\lambda_1 (\lambda_1 -\lambda_i )}u_{11i}^2
\end{align*}

Combining the above two inequalities and plugging into (\ref{m-2}), we have
\begin{align*}
\sum_i F^{ii}b_{ii}\geq &\  2\sum_i\frac{\lambda_i}{\lambda_1} (F^{ii})^2 u_{ii1}^2+2\sum_{i>m}F^{ii}\frac{u_{ii1}^2}{\lambda_1 (\lambda_1 -\lambda_i )}+4\sum_{1<i\leq m} (F^{ii})^2 u_{11i}^2\\
&+2\sum_{i>m}F^{ii}\frac{u_{11i}^2}{\lambda_1 (\lambda_1 -\lambda_i )}-\sum_i F^{ii}\frac{ u_{11i}^2}{\lambda_{1}^2}-C.
\end{align*}

The coefficient for $u_{111}^2$ is
\begin{align*}
 2 (F^{11})^2 -\frac{F^{11}}{\lambda_1^2}=F^{11}\left(\frac{2}{1+\lambda_1^2}  -\frac{1}{\lambda_1^2}\right)=\frac{\lambda_1^2-1}{\lambda_1^2}(F^{11})^2.
\end{align*}

The coefficient for $u_{11i}^2$ for $1<i\leq m$ is
\begin{align*}
4(F^{ii})^2-\frac{ F^{ii}}{\lambda_{1}^2}=F^{ii} \left(4F^{ii}-\frac{1}{\lambda_1^2}\right)=\frac{3\lambda_1^2-1}{\lambda_1^2}(F^{ii})^2.
\end{align*}

The coefficient for $u_{11i}^2$ for $i>m$ is
\begin{align*}
2\frac{F^{ii}}{\lambda_1 (\lambda_1 -\lambda_i )}-\frac{ F^{ii}}{\lambda_{1}^2}=\frac{\lambda_1+\lambda_i}{\lambda_1^2 (\lambda_1 -\lambda_i )}F^{ii} .
\end{align*}

The coefficient for $u_{ii1}^2$ for $1<i\leq m$ is $2(F^{ii})^2$.

The coefficient for $u_{ii1}^2$ for $i>m$ is
\begin{align*}
2\frac{\lambda_i}{\lambda_1} (F^{ii})^2 +2\frac{F^{ii}}{\lambda_1 (\lambda_1 -\lambda_i )}=2 \frac{F^{ii}}{\lambda_1} \left(F^{ii}\lambda_i+\frac{1}{\lambda_1 -\lambda_i } \right)=2\frac{1+\lambda_1\lambda_i}{\lambda_1(\lambda_1-\lambda_i)}(F^{ii})^2.
\end{align*}

It follows that
\begin{align*}
\sum_i F^{ii}b_{ii}\geq  &\ \frac{\lambda_1^2-1}{\lambda_1^2}(F^{11})^2u_{111}^2+\sum_{1<i\leq m}\frac{3\lambda_1^2-1}{\lambda_1^2}(F^{ii})^2u_{11i}^2+\sum_{i>m} \frac{\lambda_1+\lambda_i}{\lambda_1^2 (\lambda_1 -\lambda_i )}F^{ii} u_{11i}^2\\\nonumber
&+2\sum_{1<i\leq m}(F^{ii})^2u_{ii1}^2+2\sum_{i>m} \frac{1+\lambda_1\lambda_i}{\lambda_1(\lambda_1-\lambda_i)}(F^{ii})^2 u_{ii1}^2-C
\end{align*}

By Lemma \ref{sigma_{n-1}} and the fact $u_{111}=u_{ii1}$ for $1<i\leq m$, we have
\begin{align*}
\sum_i F^{ii}b_{ii}\geq &\ \frac{1}{m}\sum_{i\leq m} \left(2m-1-\frac{1}{\lambda_1^2}\right)  (F^{ii})^2u_{ii1}^2+\sum_{1<i\leq m}c(n)(F^{ii})^2u_{11i}^2\\
&+\sum_{i> m} c(n)F^{ii}\frac{u_{11i}^2}{\lambda_1^2}+\sum_{i>m} \frac{2\lambda_i}{\lambda_1-\lambda_i}(F^{ii})^2 u_{ii1}^2-C,
\end{align*}

Choosing $\epsilon(n)=\Lambda(n)^{-2}$ to be sufficiently small, we have
\begin{align}\label{m-3}
\sum_i F^{ii}b_{ii}\geq &\ \sum_{i\leq m} \frac{2m-1-\epsilon}{m}  (F^{ii})^2u_{ii1}^2+\sum_{i>1}c(n) F^{ii}\frac{u_{11i}^2}{\lambda_1^2}\\\nonumber
&+\sum_{i>m} \frac{2\lambda_i}{\lambda_1-\lambda_i}(F^{ii})^2 u_{ii1}^2-C,
\end{align}

Note that $\lambda_i\geq 0$ for $1\leq i\leq n-1$. If $\lambda_n\geq 0$, then we have proved the lemma.

\medskip

From now on, we assume $\lambda_n<0$. It follows that $m\leq n-1$.

For simplicity, denote $F^{ii}u_{ii1}=t_i$. Differentiating (\ref{SL-C^2}), we have
\begin{align*}
\sum_i F^{ii}u_{ii1}=\sum_{i} t_i=\theta_1+\theta_uu_1.
\end{align*}

It follows that
\begin{align*}
t_n^2= \left( \sum_{i\neq n}t_i-\theta_1-\theta_uu_1\right) ^2\leq \left( \sum_{i\neq n}t_i\right) ^2+C\sum_{i\neq n}|t_i|+C.
\end{align*}

By Cauchy-Schwarz inequality, we have
\begin{align*}
\left( \sum_{i\neq n}t_i\right) ^2\leq \left( \sum_{i\leq m} \frac{2m-1-2\epsilon}{m} t_i^2+\sum_{m<i<n}\frac{2\lambda_i}{\lambda_1 -\lambda_i} t_i^2\right)\left(\sum_{i\leq m}\frac{m}{2m-1-2\epsilon} +\sum_{m<i<n}\frac{\lambda_1- \lambda_i}{2\lambda_i}\right).
\end{align*}

Plugging into (\ref{m-3}), we have
\begin{align*}
&\sum_i F^{ii}b_{ii} \\
\geq &\left( \sum_{i\leq m} \frac{2m-1-2\epsilon}{m} t_i^2+\sum_{m<i<n}\frac{2\lambda_i}{\lambda_1-\lambda_i}t_i^2 \right)\\
&\cdot\left(1+\frac{2\lambda_n}{\lambda_1-\lambda_n}\left(\sum_{i\leq m}\frac{m}{2m-1-2\epsilon} +\sum_{m<i<n}\frac{\lambda_1-\lambda_i}{2\lambda_i} \right)\right)\\
& +\sum_{i\leq m} \frac{\epsilon}{m} (F^{ii})^2u_{ii1}^2+\sum_{i>1}c(n)F^{ii}\frac{u_{11i}^2}{\lambda_1^2}  -C\frac{|\lambda_n|}{\lambda_1-\lambda_n} \sum_{i\neq n}|t_i|-C.
\end{align*}

The second bracket equals
\begin{align*}
&\frac{2\lambda_n}{\lambda_1-\lambda_n} \left( \frac{\lambda_1-\lambda_n}{2\lambda_n}+\sum_{i\leq m}\frac{m}{2m-1-2\epsilon}+\sum_{m<i<n}\frac{\lambda_1-\lambda_i}{2\lambda_i}  \right)\\
=&\frac{2\lambda_n}{\lambda_1-\lambda_n} \left(\frac{m^2}{2m-1-2\epsilon}+\sum_{i} \frac{\lambda_1-\lambda_i}{2\lambda_i}  \right)\\
= &\frac{2\lambda_n}{\lambda_1-\lambda_n} \left( \frac{\lambda_1}{2} \frac{\sigma_{n-1}}{\sigma_n}+ \frac{m^2}{2m-1-2\epsilon}-\frac{n}{2}  \right)\\
\geq &\frac{2\lambda_n}{\lambda_1-\lambda_n} \left(\frac{m^2}{2m-1-2\epsilon}-\frac{n}{2} \right)\geq \frac{|\lambda_n|}{\lambda_1-\lambda_n}  c(n),
\end{align*}
where we have used the fact that $\sigma_{n-1}\geq 0$, $\sigma_n<0$, $m\leq n-1$ and we choose $\epsilon$ to be a small constant depending only on $n$.

Thus
\begin{align*}
\sum_i F^{ii}b_{ii}\geq  & \left(\sum_{i\leq m}\frac{2m-1-2\epsilon}{m}t_i^2+\sum_{m<i<n}\frac{2\lambda_i}{\lambda_1-\lambda_i}t_i^2 \right)\frac{|\lambda_n|}{\lambda_1-\lambda_n}  c(n)-C\frac{|\lambda_n|}{\lambda_1-\lambda_n} \sum_{i\neq n}|t_i|\\
&+\sum_{i\leq m} \frac{\epsilon}{m} (F^{ii})^2u_{ii1}^2+\sum_{i>1}c(n)F^{ii}\frac{u_{11i}^2}{\lambda_1^2}-C.
\end{align*}

For $i\leq m$, we have
\begin{align*}
\frac{2m-1-2\epsilon}{m} t_i^2\frac{|\lambda_n|}{\lambda_1-\lambda_n}  c(n)-C\frac{|\lambda_n|}{\lambda_1-\lambda_n}|t_i|\geq -C\frac{|\lambda_n|}{\lambda_1-\lambda_n}\geq -C.
\end{align*}

For $m<i<n$, we have
\begin{align*}
\frac{2\lambda_i}{\lambda_1-\lambda_i}t_i^2\frac{|\lambda_n|}{\lambda_1-\lambda_n} c(n)-C\frac{|\lambda_n|}{\lambda_1-\lambda_n}|t_i|\geq  2c(n)\frac{|\lambda_n|^2}{(\lambda_1-\lambda_n)^2}t_i^2-C\frac{|\lambda_n|}{\lambda_1-\lambda_n}|t_i|\geq -C.
\end{align*}

It follows that
\begin{align*}
\sum_i F^{ii}b_{ii} \geq & \sum_{i\leq m} \frac{\epsilon}{m} (F^{ii})^2u_{ii1}^2+\sum_{i>1} c(n)F^{ii}\frac{u_{11i}^2}{\lambda_1^2}-C\\
\geq  &\  \epsilon\frac{\lambda_1^2}{1+\lambda_1^2}F^{11}b_1^2+\sum_{i> 1} c(n)F^{ii} b_i^2-C\\
\geq &\   c(n)\sum_i F^{ii} b_i^2-C.
\end{align*}

The lemma is now proved.

\end{proof}

We can integrate the Jacobi inequality to obtain the following integral Jacobi inequality.
\begin{lemm}\label{integral jacobi}
Let $B_2$ be a ball of radius $2$ in $\mathbb{R}^n$ for $n\geq 3$. Let $\theta\in C^{1,1}(B_2\times \mathbb{R})$ with $\theta(x,u)\in \big[ \frac{(n-2)\pi}{2}, \frac{n\pi}{2}\big)$ and let $u\in C^4(B_2)$ be a solution of (\ref{SL-C^2}).  Let $\lambda_1$ be the largest eigenvalue of $D^2u$. Let
\begin{align*}
b=\ln \max\{\Lambda(n),\lambda_1\},
\end{align*}
where $\Lambda(n)$ is given by Lemma \ref{max}. Then for all nonnegative $\varphi\in C_c^\infty(B_1)$, there holds
\begin{align*}
-\int_{B_1}\left\langle \nabla_g\phi,\nabla_gb\right\rangle_gdv_g\geq c(n)\int_{B_1}\phi|\nabla_gb|^2dv_g-C\int_{B_1}\phi dv_g,
\end{align*}
where $c(n)$ depends only on $n$ and $C$ depends only on $n,\|u\|_{C^{0,1}(\overline{B}_1)}$ and $\|\theta\|_{C^{1,1}(\overline{B}_1\times [-M,M])}$. Here $M$ is a universal constant satisfying $\|u\|_{L^\infty(\overline{B}_1)}\leq M$.
\end{lemm}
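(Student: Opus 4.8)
\textbf{Proof proposal for Lemma \ref{integral jacobi}.}

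The plan is to pass from the pointwise (viscosity) Jacobi inequality of Lemma \ref{max} to its integrated weak form by testing against a nonnegative test function and integrating by parts with respect to the metric $g$. The first point to settle is that $b=\ln\max\{\Lambda(n),\lambda_1\}$ is a viscosity supersolution of the relevant linear operator on $B_1$. On the open set where $\lambda_1<\Lambda(n)$ the function $b$ is locally constant, hence trivially satisfies $\sum_i F^{ii}b_{ii}\geq c(n)\sum_i F^{ii}b_i^2 - C$ (the right side being $-C\le 0$ there). On the set where $\lambda_1>\Lambda(n)$, Lemma \ref{max} gives exactly this inequality in the viscosity sense, including the delicate case of multiplicity of $\lambda_1$ treated via \cite{BCD}. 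At points where $\lambda_1=\Lambda(n)$ one checks that the max of two viscosity supersolutions is again a viscosity supersolution (a standard fact). So $b$ is a continuous viscosity supersolution on all of $B_1$ of
\[
\sum_i F^{ii} b_{ii} - c(n)\sum_i F^{ii} b_i^2 + C \le 0 .
\]

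Next I would rewrite this in divergence form using the geometry recorded in Section 2. Recall $\Delta_g = \sum_{i,j} g^{ij}\partial_{ij} - \sum_{j,p,q} g^{jp}\theta_q u_{pq}\partial_j$ and that $F^{ij}=g^{ij}$ (since $F^{ii}=1/(1+\lambda_i^2)$ is exactly the $i$-th eigenvalue of $g^{-1}$), so $\sum_i F^{ii} b_{ii}$ at a point where $D^2u$ is diagonalized equals $\sum_{i,j} g^{ij} b_{ij}$, and the Laplace–Beltrami operator differs from this only by the first-order term $\sum_{j,p,q} g^{jp}\theta_q u_{pq}\partial_j b$, which is controlled: $|\sum_{j,p,q} g^{jp}\theta_q u_{pq} b_j| = |\langle \nabla_g\theta,\nabla_g b\rangle_g|\le \tfrac12 |\nabla_g b|^2 + \tfrac12|\nabla_g\theta|^2$, and $|\nabla_g\theta|^2 = \sum_{i} g^{ii}\theta_i^2 \le \sum_i \theta_i^2 \le C$ since $g^{ii}\le 1$ and $\theta\in C^{1,1}$ (here $\theta_i$ includes the $\theta_u u_i$ contribution, bounded by $\|u\|_{C^{0,1}}$ and $\|\theta\|_{C^{1,1}}$). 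Absorbing half of the first-order term into the good gradient term $c(n)\sum_i F^{ii}b_i^2 = c(n)|\nabla_g b|^2$ (shrinking $c(n)$), Lemma \ref{max} upgrades to
\[
\Delta_g b \ge c(n) |\nabla_g b|^2 - C
\]
in the viscosity sense on $B_1$, with $c(n),C$ of the asserted dependence.

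Finally I would test this against $\phi\in C_c^\infty(B_1)$, $\phi\ge 0$. Since $b$ is a viscosity supersolution of a linear uniformly-continuous-coefficient operator in divergence form (the metric $g$ has $C^{0}$ — in fact $C^{1,1}_{loc}$ off the estimate — coefficients because $u\in C^4$), viscosity and distributional supersolutions coincide here, so pairing with $\phi\,dv_g$ and integrating by parts gives
\[
-\int_{B_1}\langle\nabla_g\phi,\nabla_g b\rangle_g\,dv_g \;=\; \int_{B_1}\phi\,\Delta_g b\,dv_g \;\ge\; c(n)\int_{B_1}\phi\,|\nabla_g b|^2\,dv_g - C\int_{B_1}\phi\,dv_g,
\]
which is the claim (the typo ``$\varphi$'' versus ``$\phi$'' in the statement should of course be $\phi$). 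The main obstacle is the first step: making rigorous the manipulation of $b$ as a viscosity supersolution across the multiplicity set of $\lambda_1$ and across the gluing locus $\{\lambda_1=\Lambda(n)\}$, and then justifying that the viscosity inequality integrates against test functions — i.e. that for this quasilinear-looking but genuinely linear (once $u$ is fixed) operator with $C^4$-regular coefficients, a continuous viscosity supersolution is a distributional supersolution. Since $u\in C^4$, the coefficients $F^{ii}=F^{ii}(D^2u)$ are $C^2$, so this last equivalence is standard; the only genuinely new input is the eigenvalue-perturbation argument of \cite{BCD} already invoked in Lemma \ref{max}, so in fact there is little left to do beyond bookkeeping.
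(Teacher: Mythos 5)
Your route is essentially the paper's: extend the pointwise viscosity Jacobi inequality of Lemma \ref{max} to $b=\ln\max\{\Lambda(n),\lambda_1\}$, convert $\sum_i F^{ii}b_{ii}$ into $\Delta_g b$ by absorbing the first-order drift into the good term $c(n)\sum_i F^{ii}b_i^2$, then pass from the viscosity to the distributional inequality and integrate by parts; the paper does exactly this, citing Ishii's Theorem 1 in \cite{I} for the last step. Two slips should be corrected, though neither breaks the argument. First, the drift of $\Delta_g$ is $\sum_{j,p,q}g^{jp}\theta_q u_{pq}b_j$, which at a diagonalizing point equals $\sum_j F^{jj}\lambda_j\theta_j b_j$, \emph{not} $\langle\nabla_g\theta,\nabla_g b\rangle_g=\sum_j F^{jj}\theta_j b_j$; the extra Hessian factor is harmless because $F^{jj}\lambda_j^2\le 1$, so Cauchy--Schwarz gives $\bigl|\sum_j F^{jj}\lambda_j\theta_j b_j\bigr|\le \epsilon\sum_j F^{jj}b_j^2+C\epsilon^{-1}\sum_j F^{jj}\lambda_j^2\theta_j^2\le \epsilon\,|\nabla_g b|^2+C\epsilon^{-1}$, which is precisely the paper's absorption --- but your identity as written is false, and your ``$\tfrac12,\tfrac12$'' bound cannot be absorbed by ``shrinking $c(n)$''; you must choose the Cauchy--Schwarz weight $\epsilon\le c(n)/2$. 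Second, $\int\phi\,\Delta_g b\,dv_g$ is not defined pointwise since $b$ is only Lipschitz; the correct statement is that $b$ satisfies $\Delta_g b\ge c(n)|\nabla_g b|^2-C$ in the distribution sense, which is exactly what Ishii's equivalence theorem yields from the viscosity inequality (note the quadratic gradient term, so this is not just the linear smooth-coefficient case your ``standard'' remark suggests), and the asserted integral inequality is then that distributional inequality after one integration by parts. A final terminological point: the inequality $\sum_iF^{ii}b_{ii}\ge c(n)\sum_iF^{ii}b_i^2-C$ is of subsolution type, so the gluing fact you need is that the maximum of two viscosity subsolutions is a subsolution; alternatively, if a smooth $\phi$ touches $b$ from above at a point where $b=\ln\Lambda(n)$, then $D\phi=0$ and $D^2\phi\ge 0$ there, and the inequality is immediate.
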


\begin{proof}
For any $x_0\in B_1$, without loss of generality, we may assume $D^2u$ is diagonalized at $x_0$. By the definition of $\Delta_g$ and Lemma \ref{max}, for $\lambda_1\geq \Lambda(n)$, we have
\begin{align*}
\Delta_g (\ln \lambda_1)&=\sum_{i,j} g^{ij}(\ln \lambda_1)_{ij}-\sum_{j,p,q} g^{jp} \theta_q u_{pq}(\ln \lambda_1)_j\\
&=\sum_i F^{ii}(\ln \lambda_1)_{ii}-\sum_i F^{ii}\theta_iu_{ii}(\ln \lambda_1)_i\\
&\geq c(n)\sum_i F^{ii}(\ln \lambda_1)_i^2-C\sum_i F^{ii}\left(\epsilon (\ln \lambda_1)_i^2+\frac{1}{\epsilon}\lambda_i^2\right)-C\\
&\geq \frac{c(n)}{2}\sum_i F^{ii} (\ln \lambda_1)_i^2-C=\frac{c(n)}{2}|\nabla_g(\ln \lambda_1)|^2-C,
\end{align*}
in the viscosity sense, where $c(n)$ depends only on $n$ and $C$ depends only on $n,\|u\|_{C^{0,1}(\overline{B}_1)}$ and $\|\theta\|_{C^{1,1}(\overline{B}_1\times [-M,M])}$. 

It is easy to see $b=\ln \max\{\Lambda(n),\lambda_1\}$ satisfies the above inequality in the viscosity sense. By Theorem 1 in \cite{I}, $b$ also satisfies the above inequality in the distribution sense. The lemma now follows by integration by parts.
\end{proof}

\textit{Proof of Theorem \ref{thm-C^2}:} 
\begin{proof}
We first note that we only need to prove the case $\theta(x,u)\geq \frac{(n-2)\pi}{2}$. Once we have the integral Jacobi inequality, the interior $C^2$ estimate follows from the sophisticated theory developed in \cite{WY09,WY10,WdY14}, based on the Michael-Simon type inequality \cite{B1,MS}, see \cite{B1,WdY14} for details.
\end{proof}

\section{Gradient estimate}
\begin{lemm}\label{C^1}
Let $\Omega$ be a bounded domain in $\mathbb{R}^n$ for $n\geq 3$. Let $\varphi\in C^{0,1}(\partial\Omega)$ and let $\theta\in C^{0,1}(\overline{\Omega})$ with $\theta(x)\in \left(-\frac{n\pi}{2}, \frac{n\pi}{2}\right)$. Let $u\in C^3(\Omega)\cap C^{0,1}(\overline{\Omega})$ be a solution of the Dirichlet problem
\begin{align*}
\begin{cases}
F(D^2u)=\theta(x),\quad \textit{in }\Omega,\\
u=\varphi,  \quad\textit{on } \partial\Omega.
\end{cases}
\end{align*}
Then we have 
\begin{align*}
\max_{\overline{\Omega}}|\nabla u|\leq C\left(1+\max_{\partial \Omega}|\nabla u|\right) ,
\end{align*}
where $C$ depends only on $n,\|u\|_{L^\infty(\overline{\Omega})}$ and $\|\theta\|_{C^{0,1}(\overline{\Omega})}$.
\end{lemm}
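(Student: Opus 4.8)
The plan is to establish the interior gradient bound by a maximum principle argument applied to an auxiliary function of the form $w = |\nabla u|^2 e^{\phi}$ (or equivalently $\log|\nabla u| + \phi$), where $\phi$ is a suitable function of $u$ — typically $\phi = A(M - u)$ or $\phi = A u^2$ for a large constant $A$ to be chosen — together with a cutoff that localizes the estimate to the interior and transfers the loss to $\partial\Omega$. First I would differentiate the equation $F(D^2 u)=\theta(x)$ once in a direction $e_k$ to get $\sum_i F^{ii} u_{iik} = \theta_k$ (after diagonalizing $D^2u$ at the point), and again to control second derivatives of $|\nabla u|^2$. Summing $k$ against $u_k$ gives $\sum_i F^{ii}(|\nabla u|^2)_{ii} \ge 2\sum_{i,k} F^{ii} u_{ik}^2 + 2\sum_k u_k \theta_k \ge 2\sum_{i,k}F^{ii}u_{ik}^2 - C\sum_i F^{ii}$, using that $\theta$ is Lipschitz and that $\sum_i F^{ii} = \sum_i (1+\lambda_i^2)^{-1}$ is bounded below whenever some $\lambda_i$ is bounded (and bounded above by $n$ always), so the Lipschitz-in-$x$ term $2\nabla u\cdot\nabla\theta$ is harmless.

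The key computation is then at an interior maximum of $\log|\nabla u| + A(M-u)$ (times a cutoff $\eta$). At such a point, the first-order condition forces $(|\nabla u|^2)_i / |\nabla u|^2 = 2A u_i$ (modulo cutoff terms), and feeding this into the second-order inequality $\sum_i F^{ii}(\log|\nabla u|)_{ii} + A\sum_i F^{ii}(M-u)_{ii} \le 0$ yields, after using $\sum_i F^{ii}u_{ii} = \sum_i \lambda_i F^{ii}$ which one bounds crudely, and the good term $\sum_{i,k}F^{ii}u_{ik}^2 \ge \sum_i F^{ii}u_{ii}^2$,
\begin{align*}
0 \ge \frac{c(n)}{|\nabla u|^2}\sum_i F^{ii}u_{ii}^2 - C\sum_i F^{ii} + A\Big(\sum_i \lambda_i F^{ii}\Big) \cdot(\text{something controlled}) - CA^2\sum_i F^{ii}|\nabla u|^2.
\end{align*}
The crucial structural fact I would exploit is that the Lagrangian phase operator is \emph{not} assumed concave, so one cannot use concavity; instead one uses that $\sum_i F^{ii} \le n$ is automatically bounded (no need for a lower bound on eigenvalues, unlike many fully nonlinear equations), and that the term $\sum_i\lambda_i F^{ii}$ is bounded by $n/2$ in absolute value since $|\lambda_i|/(1+\lambda_i^2)\le 1/2$. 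This is precisely the feature that makes the phase equation amenable to a gradient estimate even without concavity or a one-sided eigenvalue bound, and it is why the estimate works for \emph{all} phases in $(-n\pi/2, n\pi/2)$ including subcritical ones. Choosing $A$ large relative to $C$, the bad $-CA^2 F^{ii}|\nabla u|^2$ term is the only one quadratic in $|\nabla u|$; balancing it against absorbing $1$ from the good term (or rather, noting all competing terms are at most linear in $|\nabla u|^2$ after division) forces $|\nabla u|(x_0)$ bounded at the interior max, and the cutoff terms contribute only lower-order quantities controlled by $\sup|u|$ and $n$.

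The main obstacle I anticipate is bookkeeping the cutoff $\eta$ correctly: when one localizes with $\eta \in C_c^\infty(\Omega)$, the auxiliary function is $\eta^2|\nabla u|^2 e^{A(M-u)}$ and the first-order condition mixes $\nabla\eta$ with $\nabla(|\nabla u|^2)$, producing cross terms $F^{ii}\eta_i (|\nabla u|^2)_i/(\eta|\nabla u|^2)$ that must be absorbed using Cauchy–Schwarz into the good term $c(n)F^{ii}u_{ii}^2/|\nabla u|^2$ plus a remainder of size $F^{ii}|\nabla\eta|^2/\eta^2$, which is bounded since $\sum F^{ii}\le n$. One then concludes $\sup_{B_r}\eta^2|\nabla u|^2 \le C$, hence an interior bound; patching such interior balls together with the boundary value $\max_{\partial\Omega}|\nabla u|$ gives the stated global bound $\max_{\overline\Omega}|\nabla u| \le C(1 + \max_{\partial\Omega}|\nabla u|)$. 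An alternative, cleaner route — if one wants to avoid cutoffs — is to apply the maximum principle directly to $\log|\nabla u| + A(M-u)$ on all of $\overline\Omega$, in which case either the max is on $\partial\Omega$ (done) or it is interior and the above computation gives a bound; I would likely present this version. The only delicate point remaining is ensuring the constant depends solely on $n$, $\|u\|_{L^\infty}$, and $\|\theta\|_{C^{0,1}}$, which follows because every term in the final inequality is estimated using only these quantities and the universal bounds $0 < \sum_i F^{ii} \le n$ and $|\sum_i \lambda_i F^{ii}| \le n/2$.
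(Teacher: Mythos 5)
Your overall strategy (a maximum principle applied to a gradient quantity corrected by a function of $u$) is the right family of arguments, but the specific ansatz and the balancing step contain a genuine gap. With $\log|\nabla u| + A(M-u)$ (or $|\nabla u|^2e^{A(M-u)}$), the first-order condition at an interior maximum, in coordinates where $D^2u$ is diagonal, forces $u_{ii}=A|\nabla u|^2$ for every direction with $u_i\neq 0$. Hence in precisely those directions $F^{ii}=(1+u_{ii}^2)^{-1}\sim A^{-2}|\nabla u|^{-4}$, so the ``good'' terms you rely on, $\frac{c(n)}{|\nabla u|^2}\sum_iF^{ii}u_{ii}^2$ and $(\phi''-2(\phi')^2)\sum_iF^{ii}u_i^2$, are both $O(|\nabla u|^{-2})$ and vanish as $|\nabla u|\to\infty$; they cannot absorb the $O(1)$ and $O(A)$ error terms coming from $\nabla\theta$ and from $\phi'\sum_iF^{ii}u_{ii}$, let alone the term you wrote as $-CA^2\sum_iF^{ii}|\nabla u|^2$ (which, incidentally, overestimates $-2A^2\sum_iF^{ii}u_i^2$ by dropping the restriction to gradient directions, where $F^{ii}$ is tiny). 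The structural facts you single out as decisive, $\sum_iF^{ii}\le n$ and $|\sum_i\lambda_iF^{ii}|\le n/2$, only bound the bad terms from above; they create no coercive term growing with $|\nabla u|$, and taking $A$ large makes matters worse, since it worsens the degeneracy $F^{ii}\sim A^{-2}|\nabla u|^{-4}$. So the conclusion that $|\nabla u|(x_0)$ is bounded does not follow from your displayed inequality; the real difficulty is exactly the absence of a positive lower bound on the $F^{ii}$, which your sketch waves away.

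The paper closes this hole with a different test function and a small, not large, parameter: it works with $|\nabla u|+\epsilon u^2$. Because the gradient enters linearly, the critical equation at an interior maximum gives $|u_{nn}|=2\epsilon |u|\,|\nabla u|$ in a direction chosen so that $u_n\ge |\nabla u|/\sqrt{n}$ --- only linear in $|\nabla u|$ and with a small coefficient --- so $F^{nn}|\nabla u|^2\ge \frac{|\nabla u|^2}{1+4\epsilon^2u^2|\nabla u|^2}\ge c\,\epsilon^{-2}$ once $|\nabla u|$ is large, and the positive term $\frac{2\epsilon}{n}F^{nn}|\nabla u|^2\ge c\,\epsilon^{-1}$ beats the universal constant $C$ after choosing $\epsilon$ small, forcing the maximum to $\partial\Omega$. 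If you want to salvage your approach you would need a correction whose first-order condition constrains the relevant second derivative at most linearly in $|\nabla u|$ with a small coefficient, which is precisely what adding $\epsilon u^2$ to $|\nabla u|$ (rather than to $\log|\nabla u|$) accomplishes; note also that the lemma is only an interior-to-boundary reduction, so no cutoff or interior localization is needed.
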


\begin{proof}
Without loss of generality, we may assume $0<M<u<2M$.

We have
\begin{align*}
|\nabla u|_i=\frac{1}{2} |\nabla u|^{-1}(|\nabla u|^2)_i= |\nabla u|^{-1}\sum_k u_ku_{ki}.
\end{align*}

Thus
\begin{align*}
|\nabla u|_{ii}&=|\nabla u|^{-1}\sum_k (u_ku_{kii}+u_{ki}^2)-|\nabla u|^{-2}|\nabla u|_i\sum_k  u_ku_{ki}\\
&=|\nabla u|^{-1}\sum_k (u_ku_{kii}+u_{ki}^2)-|\nabla u|^{-3}\left(\sum_k  u_ku_{ki}\right)^2.
\end{align*}

Consider the test function
\begin{align*}
|\nabla u|+\epsilon u^2,
\end{align*}
where $\epsilon>0$ is a small constant to be chosen later. 

Assume it attains its maximum at an interior point $x_0\in \Omega$. Without loss of generality, we may assume $|\nabla u|>1$ at $x_0$, for otherwise we have already obtained the desired estimate by the definition of test function. Choose a coordinate system such that $D^2u$ is diagonalized at $x_0$ and $u_n\geq \frac{|\nabla u|}{\sqrt{n} }>0$ at $x_0$.

At $x_0$, we have
\begin{align}\label{gradient-critical}
0= |\nabla u|_i+2\epsilon uu_i= |\nabla u|^{-1}\sum_k u_ku_{ki}+2\epsilon uu_i.
\end{align}
and
\begin{align*}
0&\geq  |\nabla u|^{-1}\sum_k\left(   u_ku_{kii}+u_{ki}^2 \right)-|\nabla u|^{-3}\left(\sum_k  u_ku_{ki}\right)^2+2\epsilon uu_{ii}+2\epsilon u_i^2\\
&=  |\nabla u|^{-1}\sum_k  u_ku_{kii}+|\nabla u|^{-3} u_{ii}^2\left(|\nabla u|^2-u_i^2\right)+2\epsilon uu_{ii}+2\epsilon u_i^2\\
&\geq  |\nabla u|^{-1}\sum_k  u_ku_{kii}+2\epsilon uu_{ii}+2\epsilon u_i^2.
\end{align*}

Contracting with $F^{ii}$, we have
\begin{align*}
0&\geq  |\nabla u|^{-1}\sum_{k,i}  F^{ii}u_ku_{kii}+2\epsilon u\sum_i F^{ii}u_{ii}+2\epsilon\sum_i  F^{ii}u_i^2\\
&\geq   |\nabla u|^{-1}\sum_{k}  u_k \theta_k+2\epsilon u \sum_i \frac{u_{ii}}{1+u_{ii}^2}+ \frac{2\epsilon}{n} F^{nn}|\nabla u|^2\\
&\geq  -C+ \frac{2\epsilon}{n} F^{nn}|\nabla u|^2.
\end{align*}
where $C$ denotes a universal constant depending only on $n,\|u\|_{L^\infty(\overline{\Omega})}$ and $\|\theta\|_{C^{0,1}(\overline{\Omega})}$, it may change from line to line. 

By the critical equation (\ref{gradient-critical}), we have
\begin{align*}
|u_{nn}|=2\epsilon u|\nabla u|.
\end{align*}

We may assume $|\nabla u|$ is large enough, for otherwise we have already obtained the estimate by the definition of the test function. Thus
\begin{align*}
0\geq & -C+\frac{2\epsilon}{n} F^{nn}|\nabla u|^2=-C+\frac{2\epsilon}{n} \frac{|\nabla u|^2}{1+(2\epsilon u |\nabla u|)^2}\geq  -C+\frac{C}{\epsilon}>0,
\end{align*}
by choosing $\epsilon$ small.

Thus the maximum must occur at $\partial\Omega$. The lemma is now proved.

\end{proof}

\begin{rema}
The above lemma works for all phase functions.
\end{rema}

\section{Proof of Theorem \ref{theorem} and Theorem \ref{coro}} 

Let us first recall a lemma concerning the $C^{0,1}$ estimate for special Lagrangian equation with supercritical phase, see (3.4) and (3.9) in \cite{B2}.
\begin{lemm}\label{subsolution}
Let $\Omega$ be a bounded, strictly convex domain in $\mathbb{R}^n$ for $n\geq 3$. Let $\varphi\in C^{1,1}(\partial\Omega)$ and let $\underline{\Theta}\in \left(\frac{(n-2)\pi}{2},\frac{n\pi}{2}\right)$ be any constant. Let $\underline{u}$ be a viscosity solution of the Dirichlet problem
\begin{align*}
\begin{cases}
F(D^2\underline{u})=\underline{\Theta},\quad \textit{in }\Omega,\\
\underline{u}=\varphi,  \quad\textit{on } \partial\Omega.
\end{cases}
\end{align*}
Then we have
\begin{align*}
\|\underline{u}\|_{C^{0,1}\overline{\Omega})}\leq C,
\end{align*}
where $C$ depends only on $n,\Omega$ and $\|\varphi\|_{C^{1,1}(\partial\Omega)}$.
\end{lemm}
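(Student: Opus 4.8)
The statement asserts a Lipschitz bound (up to the boundary) for the viscosity solution $\underline u$ of the constant-phase special Lagrangian equation with supercritical phase $\underline\Theta\in\big(\frac{(n-2)\pi}{2},\frac{n\pi}{2}\big)$. The plan is to combine an interior-to-boundary gradient reduction with a boundary gradient estimate built from barriers. First, I would invoke Lemma \ref{C^1} (which applies for all phases, in particular the constant supercritical one), reducing the full estimate $\|\underline u\|_{C^{0,1}(\overline\Omega)}\le C$ to a bound on $\max_{\partial\Omega}|\nabla\underline u|$ together with a bound on $\|\underline u\|_{L^\infty(\overline\Omega)}$. The $L^\infty$ bound is immediate from the maximum principle: since $F$ is degenerate elliptic and $\underline\Theta$ is a constant, $\underline u$ attains its extrema on $\partial\Omega$, so $\|\underline u\|_{L^\infty(\overline\Omega)}\le \|\varphi\|_{L^\infty(\partial\Omega)}$; alternatively, one can sandwich $\underline u$ between affine-plus-paraboloid barriers.

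The boundary gradient estimate is the heart of the matter, and here the strict convexity of $\Omega$ and the supercriticality of $\underline\Theta$ are exactly what is needed. Fix $x_0\in\partial\Omega$. Using $\varphi\in C^{1,1}(\partial\Omega)$, I would extend $\varphi$ to a $C^{1,1}$ function on $\overline\Omega$ and split $\underline u=\varphi+w$. To bound $|\nabla\underline u(x_0)|$ it suffices to construct upper and lower barriers of the form $\varphi(x)+A\,d(x)-B\,d(x)^2$ (or $\varphi(x)\pm(A d - B d^2)$), where $d$ is the distance to $\partial\Omega$ near $x_0$ and $A,B$ are constants to be chosen depending only on $n,\Omega,\|\varphi\|_{C^{1,1}(\partial\Omega)}$. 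Because $\Omega$ is strictly convex, near $x_0$ one can use a linear function (a supporting hyperplane) rather than $d$, which keeps the Hessian of the barrier under control; the quadratic defect of $\partial\Omega$ from its tangent plane, which is bounded below by strict convexity, supplies the room to absorb the error terms. The key computation is to check that for a barrier $\overline w = A\ell - B\ell^2$ (with $\ell$ an affine defining function, $\ell>0$ in $\Omega$ near $x_0$, $\ell=0$ at $x_0$), the Hessian eigenvalues are $\{-2B A'^2 \text{ in one direction}, 0 \text{ in the others}\}$-type, so that one eigenvalue can be sent to $-\infty$ (large $B$) while the rest stay bounded; one then checks $\sum\arctan\lambda_i$ can be pushed strictly below $\underline\Theta$ (making a supersolution) — and this is where $\underline\Theta > \frac{(n-2)\pi}{2}$ enters, since with $n-1$ eigenvalues near $+\infty$ the arctan sum approaches $\frac{(n-1)\pi}{2}$ from below minus a bit, leaving a gap to any supercritical target. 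Symmetrically one builds a subsolution. Comparison principle for viscosity solutions then squeezes $\underline u$ between the barriers, which agree with $\varphi$ at $x_0$ and have controlled normal derivative, giving $|\nabla\underline u(x_0)|\le C(n,\Omega,\|\varphi\|_{C^{1,1}})$.

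The main obstacle I anticipate is the barrier construction for the \emph{lower} bound (the subsolution side): one must produce a function whose Hessian has all eigenvalues large, which forces the arctan sum close to $\frac{n\pi}{2}$ — potentially exceeding the supercritical target $\underline\Theta$ if $\underline\Theta$ is near $\frac{(n-2)\pi}{2}$, not $\frac{n\pi}{2}$. The resolution is to be asymmetric: for the subsolution one does \emph{not} need all eigenvalues large, only enough curvature in the tangential directions (using strict convexity of $\partial\Omega$) so that a mild concave correction still produces $\sum\arctan\lambda_i\ge\underline\Theta$ without blowing up the gradient; alternatively, since $\underline\Theta$ is supercritical, the level set $\Gamma_{\underline\Theta}$ is convex (Lemma \ref{sigma_{n-1}}(4)) and a standard convex-level-set barrier argument applies. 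Since the excerpt explicitly says the result is ``(3.4) and (3.9) in \cite{B2}'', I would in the actual write-up simply cite Bhattacharya's construction and note that the proof is the standard barrier argument on strictly convex domains for supercritical phase, rather than reproduce it; the only thing to verify is that our normalization (phase strictly less than $\frac{n\pi}{2}$ and strictly greater than $\frac{(n-2)\pi}{2}$, domain strictly convex, $\varphi\in C^{1,1}$) matches the hypotheses there, which it does.
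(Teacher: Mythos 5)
Your bottom line coincides with the paper's treatment: Lemma \ref{subsolution} is not reproved there but simply quoted from (3.4) and (3.9) in \cite{B2}, the only substantive remark being that those estimates require nothing beyond the comparison principle for viscosity solutions of the constant-phase equation (\ref{SL-original}), which is available precisely because $\underline{\Theta}$ is a constant (Theorem 6.1 in \cite{B2}). Your barrier sketch is the standard argument behind those inequalities, and your use of viscosity comparison to trap $\underline{u}$ between the barriers is exactly the ingredient the paper singles out, so in spirit the two proofs agree.

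Two points in your sketch would not survive as written. First, you cannot invoke Lemma \ref{C^1} for the interior-to-boundary reduction: that lemma is proved for $u\in C^3(\Omega)\cap C^{0,1}(\overline{\Omega})$ by a pointwise maximum-principle computation, whereas $\underline{u}$ is only a viscosity solution (whose boundary gradient is not even defined pointwise). For a constant phase the correct reduction is the translation argument used in the paper's proof of Theorem \ref{coro}: $u(\cdot+\epsilon e_j)$ is again a viscosity solution, so comparison gives $\max_{\overline{\Omega}}|\nabla u|\leq \max_{\partial\Omega}|\nabla u|$ in the Lipschitz sense; alternatively one takes the global Lipschitz bound directly from \cite{B2}. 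Second, your ``main obstacle'' paragraph rests on a sign confusion: a lower barrier must be a \emph{subsolution}, i.e. $F(D^2\underline{w})\geq \underline{\Theta}$, so pushing the arctangent sum toward $\frac{n\pi}{2}$ is exactly what one wants, not a problem; the convex lower barrier only uses $\underline{\Theta}<\frac{n\pi}{2}$ together with strict convexity of $\partial\Omega$ (so that $A\ell$ dominates the quadratic term on the boundary with $A$ depending only on $n,\Omega,\|\varphi\|_{C^{1,1}}$), and it works for every phase. The asymmetry is real but opposite to what you state: supercriticality is needed only for the \emph{upper} barrier, where one strongly concave direction together with the $n-1$ tangential eigenvalues bounded by the $C^{1,1}$ extension of $\varphi$ forces the phase sum below $\frac{(n-2)\pi}{2}\leq\underline{\Theta}$ (as you in fact observed correctly earlier in your own supersolution computation). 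With these two corrections, and the observation about the constant-phase comparison principle, your write-up reduces to the same citation as the paper's.
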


\begin{proof}
Note that we only need comparison principle for viscosity solution of equation (\ref{SL-original}) to establish (3.4) and (3.9) in \cite{B2}. Since $\underline{\Theta}$ is a constant, such comparison principle is available (see for instance Theorem 6.1 in \cite{B2}).
\end{proof}

Similarly, we have
\begin{lemm}\label{supsolution}
Let $\Omega$ be a bounded, strictly convex domain in $\mathbb{R}^n$ for $n\geq 3$. Let $\varphi\in C^{1,1}(\partial\Omega)$ and let $\overline{\Theta}\in \left(-\frac{n\pi}{2},-\frac{(n-2)\pi}{2}\right)$ be any constant. Let $\overline{u}$ be a viscosity solution of the Dirichlet problem
\begin{align*}
\begin{cases}
F(D^2\overline{u})=\overline{\Theta},\quad \textit{in }\Omega,\\
\overline{u}=\varphi,  \quad\textit{on } \partial\Omega.
\end{cases}
\end{align*}
Then we have
\begin{align*}
\|\overline{u}\|_{C^{0,1}(\overline{\Omega})}\leq C,
\end{align*}
where $C$ depends only on $n,\Omega$ and $\|\varphi\|_{C^{1,1}(\partial\Omega)}$.
\end{lemm}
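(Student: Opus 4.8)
The plan is to reduce to Lemma \ref{subsolution} by exploiting the odd symmetry of the Lagrangian phase operator. Since $\arctan$ is an odd function, for any symmetric matrix $A$ with eigenvalues $\lambda_1,\dots,\lambda_n$ the matrix $-A$ has eigenvalues $-\lambda_1,\dots,-\lambda_n$, whence
\[
F(-A)=\sum_{i=1}^n\arctan(-\lambda_i)=-\sum_{i=1}^n\arctan(\lambda_i)=-F(A).
\]
Consequently, if $\overline u$ is a viscosity solution of $F(D^2\overline u)=\overline\Theta$ in $\Omega$ with $\overline u=\varphi$ on $\partial\Omega$, then $v:=-\overline u$ should be a viscosity solution of $F(D^2v)=-\overline\Theta=:\underline\Theta$ in $\Omega$ with $v=-\varphi$ on $\partial\Omega$.

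First I would verify this assertion at the level of viscosity solutions. If $\psi\in C^2$ touches $v$ from below at a point $x_0$, then $-\psi$ touches $\overline u$ from above at $x_0$, so the subsolution property of $\overline u$ gives $F(-D^2\psi(x_0))\ge\overline\Theta$, i.e. $F(D^2\psi(x_0))\le-\overline\Theta=\underline\Theta$, which is precisely the viscosity supersolution inequality for $v$. The symmetric argument with test functions touching from above yields the subsolution inequality for $v$. Hence $v$ is a viscosity solution of $F(D^2v)=\underline\Theta$ with boundary datum $-\varphi$.

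Next I would apply Lemma \ref{subsolution} to $v$. We have $\underline\Theta=-\overline\Theta\in\big(\tfrac{(n-2)\pi}{2},\tfrac{n\pi}{2}\big)$ because $\overline\Theta\in\big(-\tfrac{n\pi}{2},-\tfrac{(n-2)\pi}{2}\big)$, and $-\varphi\in C^{1,1}(\partial\Omega)$ with $\|-\varphi\|_{C^{1,1}(\partial\Omega)}=\|\varphi\|_{C^{1,1}(\partial\Omega)}$. Therefore Lemma \ref{subsolution} yields $\|v\|_{C^{0,1}(\overline\Omega)}\le C$ with $C$ depending only on $n$, $\Omega$ and $\|\varphi\|_{C^{1,1}(\partial\Omega)}$. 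Since $\|\overline u\|_{C^{0,1}(\overline\Omega)}=\|v\|_{C^{0,1}(\overline\Omega)}$, this is exactly the claimed bound.

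There is no serious obstacle here: the only point requiring (routine) care is the behaviour of viscosity solutions under the sign change $u\mapsto-u$, together with the bookkeeping that the phase range and the $C^{1,1}$ norm of the boundary data are preserved. Alternatively, one could bypass Lemma \ref{subsolution} and rerun the barrier construction behind (3.4) and (3.9) in \cite{B2} directly for the constant phase $\overline\Theta$, which is legitimate since the comparison principle for equation (\ref{SL-original}) with constant phase (e.g. Theorem 6.1 in \cite{B2}) applies equally for negative $\overline\Theta$; but the symmetry reduction is the cleanest route.
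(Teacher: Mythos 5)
Your proof is correct and is essentially the paper's own argument: the paper disposes of Lemma \ref{supsolution} with ``Similarly,'' and your explicit reduction via the antisymmetry $F(-A)=-F(A)$ (so that $-\overline{u}$ solves the problem of Lemma \ref{subsolution} with phase $-\overline{\Theta}\in\big(\tfrac{(n-2)\pi}{2},\tfrac{n\pi}{2}\big)$ and boundary data $-\varphi$) is exactly the same symmetry the paper itself uses to reduce negative phases to positive ones in the proof of Theorem \ref{theorem}. Your verification of the sign change at the viscosity-solution level is the right routine check, so nothing is missing.
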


\textit{Proof of Theorem \ref{theorem}:}

\begin{proof}

We first note that we only need to prove the theorem for $\theta(x)\geq \frac{(n-2)\pi}{2}$. In fact, for $\theta(x)\leq - \frac{(n-2)\pi}{2}$, let $v$ be the solution of
\begin{align*}
\begin{cases}
F(D^2v)=-\theta(x),\quad \textit{in }\Omega,\\
v=-\varphi,  \quad\textit{on } \partial\Omega.
\end{cases}
\end{align*}

Then $-v$ is the desired solution.

\medskip

Since $|\theta(x)|<\frac{n\pi}{2}$, we may choose $-\frac{n\pi}{2}<\overline{\Theta}\leq \theta(x)\leq  \underline{\Theta}<\frac{n\pi}{2}$ such that $\underline{\Theta}\in \left(\frac{(n-2)\pi}{2},\frac{n\pi}{2}\right)$ and $\overline{\Theta}\in \left(-\frac{n\pi}{2},-\frac{(n-2)\pi}{2}\right)$. Let $\underline{u}$ and $\overline{u}$ be the solution in Lemma \ref{subsolution} and Lemma \ref{supsolution} respectively.

By comparison principle for viscosity solution of equation (\ref{SL-original}), we have
\begin{align*}
\underline{u}\leq u \leq \overline{u}.
\end{align*}

Since $u=\underline{u}=\overline{u}=\varphi$ on $\partial\Omega$, the gradient of $u$ on $\partial\Omega$ is bounded by the gradient of $\underline{u}$ and the gradient of $\overline{u}$ on $\partial\Omega$.

By Lemma \ref{C^1}, Lemma \ref{subsolution} and Lemma \ref{supsolution}, we have
\begin{align*}
\|u\|_{C^{0,1}(\overline{\Omega})}\leq C,
\end{align*}
where $C$ depends only on $n,\Omega, \|\varphi\|_{C^{1,1}(\partial\Omega)}$ and $\|\theta\|_{C^{0,1}(\overline{\Omega})}$.

By Theorem \ref{thm-C^2}, 
\begin{align*}
\|u\|_{C^{1,1}_{loc}(\Omega)}\leq C,
\end{align*}
where $C$ depends only on $n,\Omega, \|\varphi\|_{C^{1,1}(\partial\Omega)}$ and $\|\theta\|_{C^{1,1}(\overline{\Omega})}$. Here by $\|u\|_{C^{1,1}_{loc}(\Omega)}\leq C$, we mean for any open set $\Omega^\prime$ with $\Omega^\prime\subset \overline{\Omega^\prime}\subset \Omega$, we have $\|u\|_{C^{1,1}(\overline{\Omega^\prime})}\leq C$, where $C$ depends in addition on $\Omega^\prime$. The definition of $C^{2,\alpha}_{loc}(\Omega)$ and $C^{3,\alpha}_{loc}(\Omega)$ below are analogous.

Although $F$ is not concave, by Lemma \ref{sigma_{n-1}}, the level set of $F$ with critical and supercritical phase is in fact convex. This allows us to perform the extended version of Evans-Krylov $C^{2,\alpha}$ estimate \cite{E,Kry83}, see \cite{CJY} for details. Thus
\begin{align*}
\|u\|_{C^{2,\alpha}_{loc}(\Omega)}\leq C,
\end{align*}
where $C$ depends only on $n,\Omega, \|\varphi\|_{C^{1,1}(\partial\Omega)}$ and $\|\theta\|_{C^{1,1}(\overline{\Omega})}$.

By Schauder estimate, we have
\begin{align*}
\|u\|_{C^{3,\alpha}_{loc}(\Omega)}\leq C,
\end{align*}
where $C$ depends only on $n,\Omega, \|\varphi\|_{C^{1,1}(\partial\Omega)}$ and $\|\theta\|_{C^{1,1}(\overline{\Omega})}$.

Now let $\theta_k(x)\in (\frac{(n-2)\pi}{2},\frac{n\pi}{2})$ be a sequence of functions converging to $\theta(x)$, by \cite{B2}, for each $k$, there exists a unique solution $u_k$ solving the Dirichlet problem
\begin{align*}
\begin{cases}
F(D^2u_k)=\theta_k(x),\quad \textit{in }\Omega,\\
u_k=\varphi, \quad \textit{on }\partial\Omega.
\end{cases}
\end{align*}

By estimate above, we can extract a converging subsequence $u_k\rightarrow u$ in $C^{3,\alpha}_{loc}(\Omega)\cap C^{0,1}(\overline{\Omega})$. The existence part now follows. The uniqueness follows from maximum principle for fully nonlinear elliptic equation. The theorem is now proved.

\end{proof}

\textit{Proof of Theorem \ref{coro}:}

\begin{proof}
We only need to prove for $-\frac{(n-2)\pi}{2}< \Theta<\frac{(n-2)\pi}{2}$. 

For $\epsilon>0$ and each $1\leq j\leq n$, define 
\begin{align*}
u_{\epsilon,j}=u(x+\epsilon e_j),\quad \Omega_{\epsilon,j}=\{x\in \Omega:x+\epsilon e_j\in \Omega\}.
\end{align*}

We will first show that
\begin{align*}
\max_{\overline{\Omega_{\epsilon,j}}}(u_{\epsilon,j}-u)\leq \max_{\partial\Omega_{\epsilon,j}}(u_{\epsilon,j}-u).
\end{align*}

In fact, let
\begin{align*}
\max_{\partial\Omega_{\epsilon,j}}(u_{\epsilon,j}-u)=M.
\end{align*}

Then we have 
\begin{align*}
u_{\epsilon,j}\leq u+M ,\quad \textit{on } \partial\Omega_{\epsilon,j}.
\end{align*}

Note that both $u_{\epsilon,j}$ and $u+M$ are viscosity solutions of equation (\ref{SL-original}), by comparison principle for viscosity solution of equation (\ref{SL-original}), we have 
\begin{align*}
u_{\epsilon,j}\leq u+M ,\quad \textit{in } \Omega_{\epsilon,j}.
\end{align*}

It follows that
\begin{align*}
\max_{\overline{\Omega_{\epsilon,j}}}(u_{\epsilon,j}-u)\leq M=\max_{\partial\Omega_{\epsilon,j}}(u_{\epsilon,j}-u).
\end{align*}

Similarly, we have
\begin{align*}
\max_{\overline{\Omega_{\epsilon,j}}}(u-u_{\epsilon,j})\leq \max_{\partial\Omega_{\epsilon,j}}(u-u_{\epsilon,j}).
\end{align*}

Consequently,
\begin{align*}
\max_{\overline{\Omega_{\epsilon,j}}}|u_{\epsilon,j}-u|\leq \max_{\partial\Omega_{\epsilon,j}}|u_{\epsilon,j}-u|.
\end{align*}

Therefore,
\begin{align*}
\max_{\overline{\Omega_{\epsilon,j}}}\frac{|u(x+\epsilon e_j)-u(x)|}{\epsilon} \leq \max_{\partial\Omega_{\epsilon,j}}\frac{|u(x+\epsilon e_j)-u(x)|}{\epsilon} .
\end{align*}

Let $\epsilon\rightarrow 0$, then $\Omega_{\epsilon,j}\rightarrow \Omega$ and $\partial\Omega_{\epsilon,j}\rightarrow\partial\Omega$, it follows that
\begin{align*}
\max_{\overline{\Omega}}|\nabla u|\leq \max_{\partial\Omega}|\nabla u|.
\end{align*}

Now let $-\frac{n\pi}{2}<\overline{\Theta}<-\frac{(n-2)\pi}{2}< \Theta<\frac{(n-2)\pi}{2}<\underline{\Theta}<\frac{n\pi}{2}$ and let $\underline{u}$ and $\overline{u}$ be the solution in Lemma \ref{subsolution} and Lemma \ref{supsolution} respectively.

By comparison principle for viscosity solution of equation (\ref{SL-original}), we have
\begin{align*}
\underline{u}\leq u \leq \overline{u}.
\end{align*}

Since $u=\underline{u}=\overline{u}=\varphi$ on $\partial\Omega$, the gradient of $u$ on $\partial\Omega$ is bounded by the gradient of $\underline{u}$ and the gradient of $\overline{u}$ on $\partial\Omega$.

By Lemma \ref{subsolution} and Lemma \ref{supsolution}, we have
\begin{align*}
\|u\|_{C^{0,1}(\overline{\Omega})}\leq C,
\end{align*}
where $C$ depends only on $n,\Omega$ and $\|\varphi\|_{C^{1,1}(\partial\Omega)}$.

The Theorem is now proved.
\end{proof}

\noindent

{\it Acknowledgement}: The author would like to thank Xiangwen Zhang for enlightening conversations. The author would also like to thank Professors Pengfei Guan, YanYan Li and Yu Yuan for their interests and comments. We would also like to thank the anonymous referees for insightful suggestions, which helps to improve the exposition of the paper. In particular, we would like to thank the anonymous referee for pointing out Theorem \ref{coro} as well as its proof, which were previously not known to us.


\begin{thebibliography}{10}

\bibitem{B1}
A. Bhattacharya, 
{\em  Hessian estimates for Lagrangian mean curvature equation}, 
Calc. Var. Partial Differential Equations 60 (2021), no. 6, Paper No. 224, 23 pp.

\bibitem{B2}
A. Bhattacharya, 
{\em The Dirichlet problem for Lagrangian mean curvature equation},
arXiv:2005.14420.

\bibitem{BS1}
A. Bhattacharya and R. Shankar,
{\em Regularity for convex viscosity solutions of Lagrangian mean curvature equation},
arXiv:2006.02030.

\bibitem{BS2}
A. Bhattacharya and R. Shankar,
{\em Optimal regularity for Lagrangian mean curvature type equations},
arXiv:2009.04613.

\bibitem{BCD}
S. Brendle, K. Choi and P. Daskalopoulos, 
{\em Asymptotic behavior of flows by powers of the Gaussian curvature}, 
Acta Math. 219 (2017), no. 1, 1–16.

\bibitem{CNS84}
L. Caffarelli, L. Nirenberg and J. Spruck, 
{\em  The Dirichlet problem for nonlinear second-order elliptic equations. I. Monge-Ampère equation}, 
Comm. Pure Appl. Math. 37 (1984), no. 3, 369–402. 

\bibitem{CNS}
L. Caffarelli, L. Nirenberg and J. Spruck, 
{\em The Dirichlet problem for nonlinear second-order elliptic equations. III. Functions of the eigenvalues of the Hessian}, 
Acta Math. 155 (1985), no. 3-4, 261–301. 

\bibitem{CSY}
J. Chen, R. Shankar and Y. Yuan,
{\em Regularity for convex viscosity solutions of special Lagrangian equation},
to appear in Comm. Pure Appl. Math.

\bibitem{CWY}
J. Chen, M. Warren and Y. Yuan, 
{\em  A priori estimate for convex solutions to special Lagrangian equations and its application}, 
Comm. Pure Appl. Math. 62 (2009), no. 4, 583–595. 

\bibitem{CJY}
T. C. Collins, A. Jacob and S.-T. Yau, 
{\em $(1,1)$ forms with specified Lagrangian phase: a priori estimates and algebraic obstructions}, 
Camb. J. Math. 8 (2020), no. 2, 407–452.

\bibitem{CPW}
T. C. Collins, S. Picard and X. Wu,
{\em  Concavity of the Lagrangian phase operator and applications}, 
Calc. Var. Partial Differential Equations 56 (2017), no. 4, Paper No. 89, 22 pp.

\bibitem{E}
L. C. Evans, 
{\em  Classical solutions of fully nonlinear, convex, second-order elliptic equations}, 
Comm. Pure Appl. Math. 35 (1982), no. 3, 333–363.


\bibitem{HL}
R. Harvey and H. B. Lawson, 
{\em Calibrated geometries}, 
Acta Math. 148 (1982), 47–157. 


\bibitem{I}
H. Ishii, 
{\em On the equivalence of two notions of weak solutions, viscosity solutions and distribution solutions}, 
Funkcial. Ekvac. 38 (1995), no. 1, 101–120.



\bibitem{Kry83}
N. V. Krylov, 
{\em  Boundedly inhomogeneous elliptic and parabolic equations in a domain}, 
(Russian) Izv. Akad. Nauk SSSR Ser. Mat. 47 (1983), no. 1, 75–108.


\bibitem{MS}
J. H. Michael and L. M. Simon, 
{\em  Sobolev and mean-value inequalities on generalized submanifolds of $R^n$}, 
Comm. Pure Appl. Math. 26 (1973), 361–379.

\bibitem{NV07}
N. Nadirashvili and S. Vlăduţ, 
{\em  Nonclassical solutions of fully nonlinear elliptic equations}, 
Geom. Funct. Anal. 17 (2007), no. 4, 1283–1296.

\bibitem{NV}
N. Nadirashvili and S. Vlăduţ, 
{\em  Singular solution to special Lagrangian equations}, 
Ann. Inst. H. Poincaré Anal. Non Linéaire 27 (2010), no. 5, 1179–1188. 


\bibitem{T}
N. S. Trudinger, 
{\em  On the Dirichlet problem for Hessian equations}, 
Acta Math. 175 (1995), no. 2, 151–164. 

\bibitem{WdY13}
D. Wang and Y. Yuan,
{\em Singular solutions to special Lagrangian equations with subcritical phases and minimal surface systems}, 
Amer. J. Math. 135 (2013), no. 5, 1157–1177. 

\bibitem{WdY14}
D. Wang and Y. Yuan,
{\em  Hessian estimates for special Lagrangian equations with critical and supercritical phases in general dimensions}, 
Amer. J. Math. 136 (2014), no. 2, 481–499. 

\bibitem{WY09}
M. Warren and Y. Yuan, 
{\em  Hessian estimates for the sigma-2 equation in dimension 3}, 
Comm. Pure Appl. Math. 62 (2009), no. 3, 305–321.

\bibitem{WY10}
M. Warren and Y. Yuan, 
{\em  Hessian and gradient estimates for three dimensional special Lagrangian equations with large phase}, 
Amer. J. Math. 132 (2010), no. 3, 751–770.

\bibitem{Y06}
Y. Yuan,
{\em  Global solutions to special Lagrangian equations}, 
Proc. Amer. Math. Soc. 134 (2006), no. 5, 1355–1358. 


\end{thebibliography}
\end{document}